\numberwithin{equation}{section}
\newcounter{dummy}
\newcommand\myitem[1][]{\item[#1]\refstepcounter{dummy}\def\@currentlabel{#1}}
\theoremstyle{plain}
\newtheorem{theorem}{Theorem}[section]
\newtheorem{proposition}[theorem]{Proposition}
\newtheorem{corollary}[theorem]{Corollary}
\newtheorem{lemma}[theorem]{Lemma}
\newtheorem{definition}[theorem]{Definition}
\theoremstyle{remark}
\newtheorem{remark}[theorem]{Remark}
\newtheorem{example}[theorem]{Example}
\newtheorem*{notation}{Notation}
\newtheorem*{acknowledgment}{Acknowledgment}
\newcommand{\Z}{\mathbb{Z}} 
\newcommand{\Q}{\mathbb{Q}} 
\newcommand{\R}{\mathbb{R}} 
\newcommand{\Cl}[2]{Cl^{#2}_{#1}} 
\newcommand{\cl}[2]{h^{#2}_{#1}} 
\newcommand{\idunits}[2]{\mathcal{U}^{#2}_{#1}} 
\newcommand{\units}[2]{E_{#1}^{#2}} 
\newcommand{\locunits}[1]{U_{#1}} 
\newcommand{\idCl}[1]{\mathcal{C}_{#1}} 
\newcommand{\Qq}[2]{Q_{#1}^{#2}} 
\DeclarePairedDelimiterXPP{\hht}[3]{\widehat{h}^{#1}}{(}{)}{}{#2,#3} 
\DeclarePairedDelimiterXPP{\hh}[3]{h^{#1}}{(}{)}{}{#2,#3} 
\DeclarePairedDelimiterXPP{\HHT}[3]{\widehat{H}^{#1}}{(}{)}{}{#2,#3} 
\DeclarePairedDelimiterXPP{\HH}[3]{H^{#1}}{(}{)}{}{#2,#3} 
\newcommand{\Ext}[4]{\mathrm{Ext}^{#1}_{#2}\left(#3,#4\right)} 
\newcommand{\Hom}{\mathrm{Hom}} 
\newcommand{\Ind}{\mathrm{Ind}} 
\newcommand{\Inf}{\mathrm{Inf}} 
\newcommand{\Res}{\mathrm{Res}} 
\newcommand{\Ker}{\mathrm{Ker}} 
\newcommand{\Coker}{\mathrm{Coker}} 
\newcommand{\dual}[1]{#1^*} 
\newcommand{\WW}[1]{\mathcal{W}^{#1}} 
\newcommand{\tors}[1]{#1_{\mathrm{tors}}} 
\newcommand{\mt}[1]{\overline{#1}} 
\newcommand{\rc}[3][]{\mathcal{C}^{#1}_{#2}\!\left(#3\right)} 
\newcommand{\card}[1]{\left|#1\right|} 
\newcommand{\kd}[2]{\psi_{#1}\left(#2\right)} 
\newcommand{\roots}[1]{\mu_{#1}} 
\newcommand{\reg}[2]{R_{#1}^{#2}} 
\newcommand{\rcring}{\mathscr{R}}
\newcommand{\rcfield}{\mathscr{K}}
\newcommand{\rcringalt}{\mathscr{S}}
\newcommand{\rcfieldalt}{\mathscr{E}}
\newcommand{\rcfieldext}{\mathscr{L}}
\newcommand{\cont}[1]{\chi(#1)} 
\newcommand{\ie}{i.~e.~} 
\begin{document}
\title{\textsc{An algebraic proof of Brauer's class number formula}}
\author{Luca Caputo}
\email{luca.caputo@gmx.com}
\subjclass[2020]{11R29, 11R33, 11R34, 11R37}
\begin{abstract}
We show how, starting from the Tate exact sequence for units of Ritter and Weiss, one can obtain an algebraic proof of Brauer's class number formula, using the formalism of regulator constants. 
\end{abstract}

\maketitle


\section{Introduction}
Let $K/k$ be a finite Galois extension of number fields with Galois group $G$ and let $S$ be a finite $G$-stable set of primes of $K$ containing the archimedean primes. If $G$ is not cyclic, one can find a non empty set of non zero integers $\left\{n_H\right\}_{H\leq G}$ such that the following formula holds
\begin{equation}\label{eq:bcnf}
\prod_{H\leq G}\left(\cl{K^H}{S}\right)^{n_H}= \prod_{H\leq G}\left(\frac{\card{\roots{K^H}}}{\reg{K^H}{S}}\right)^{n_H}.
\end{equation}
Here $\cl{K^H}{S}$ is the $S$-class number of $K$, \ie the order of the $S_{K^H}$-class group of $K^H$ (where $S_{K^H}$ is the set of primes of $K^H$ which lie below primes of $S$), $\reg{K^H}{S}$ is the regulator of the $S_{K^H}$-units of $K^H$ and $\roots{K^H}$ is the group of roots of unity of $K^H$. 

This result was proved for the first time in generality by Brauer and Kuroda independently (see \cite{Brauer1951} and \cite{Kuroda1950}). More precisely, they showed that, if there is an isomorphism of $\Q[G]$-modules 
\begin{equation}\label{eq:br}
\bigoplus_{n_H> 0}\Q[G/H]^{n_H} \cong \bigoplus_{n_H< 0}\Q[G/H]^{-n_H},
\end{equation}
then the above formula holds, as a consequence of Artin’s formalism of $L$-functions and the formula for the residue of the Dedekind zeta function at $s=1$. A set $\left\{n_H\right\}_{H\leq G}$ satisfying \eqref{eq:br} is referred to as a Brauer relation, briefly denoted as $\sum_{H\leq G}n_H H$, and formula \eqref{eq:bcnf} is called Brauer's class number formula. One can show that every non cyclic group has a Brauer relation and cyclic groups have none: for the most general results on Brauer relations, see \cite{Tornehave1984}, \cite{Bouc2006}, \cite{BartelDokchitser2015} and \cite{BartelDokchitser2014}. Brauer also showed that the left-hand side of \eqref{eq:bcnf} can assume only finitely many values as $K$ varies over the Galois extensions of $k=\Q$ with Galois group $G$ and $\card{S}$ is fixed.

It is natural to look for an algebraic proof of \eqref{eq:bcnf} and specifically one describing the right-hand term of \eqref{eq:bcnf} explicitly as a rational number. In fact the search for such proof has a long history. The first particular case to be analyzed was that where $G$ is the smallest non cyclic group, namely $G=\Z/2 \times \Z/2$. In such case, assuming that $K=\Q(\sqrt{d},\sqrt{-d})$ for some square-free integer $d>1$, Hilbert gave an algebraic proof of the following formula (see \cite{Hilbert1894}) which was first proved analytically by Dirichlet\footnote{About this result, Dirichlet wrote (see \cite[p. 378]{Dirichlet1841}, translated from German by C. Greither): \textit{If we do not go wrong entirely, this result contains one of the most beautiful theorems in the theory of complex numbers. And it is bound to surprise us all the more, considering that in the real theory, there is no apparent connection at all between forms that correspond to opposite discriminants.}
}
\[
h_K = \frac{1}{2}Q_Kh_{\Q(\sqrt{d})}h_{\Q(\sqrt{-d})}.
\]
Here $S$ is implicitly assumed to be the set of archimedean primes (and is omitted from the notation) and $Q_K=1,2$ depending on certain explicit conditions on the norm of fundamental units or roots of unity of $K$. 
Hilbert's result was later generalized by various authors (see \cite{Lemmermeyer1994}, where a detailed bibliography as well as the most general proof can be found). Other particular cases analyzed in details include elementary abelian $p$-groups and dihedral groups (see for instance \cite{Kani1994}, \cite{deSmit2001}, \cite{BarteldeSmit2013}, \cite{CaputoNuccio2019} and the references therein). To our knowledge, the only algebraic proof of \eqref{eq:bcnf} which holds with no restriction on the finite group $G$, the ground number field $k$ or the finite $G$-invariant set of primes $S$ (containing the archimedean primes) is the one given by de Smit (see \cite{deSmit2002}).

Hilbert's proof already suggests a general approach to obtain such proofs: on the one hand the ratio of class numbers is shown to be equal to an index of groups of units of subfields of $K$ in the group of units of $K$. On the other hand, the unit index can be translated into a ratio of regulators and orders of groups of roots of unity. This translation step is carried out in general by Brauer (see \cite{Brauer1951}, \cite{Walter1979}, \cite{Kani1994}), while some of the most recent papers omit it and focus rather on finding general methods to describe the unit index (see \cite{deSmit2001}, \cite{BosmaDeSmit2001}, \cite{Bartel2012}, \cite{BarteldeSmit2013}, \cite{CaputoNuccio2019}). Such description of the unit index may then be used to compute explicit bounds on the left-hand side of \eqref{eq:bcnf}.  

In this paper we propose a new algebraic proof of \eqref{eq:bcnf}, which holds for any finite group $G$, any ground number field $k$ and any finite set of primes $S$ of $K$ (containing the archimedean primes). Here, contrarily to de Smit's proof which treats units and class groups in an integrated way, we follow somehow Hilbert's strategy and separate units from class groups. The pivotal role of the unit index is played here by the so-called regulator constant of the group of units of $K$ associated to the Brauer relation $\sum_{H\leq G}n_H H$. The regulator constant of a finitely generated $\Z[G]$-module $M$ associated to a relation $\sum_{H\leq G}n_H H$ is essentially a ratio of determinants of a bilinear form on submodules of fixed elements $M^H$, raised to the power $n_H$, as $H$ varies over the subgroups of $G$ (see Section \ref{sec:rc} for a precise definition). The module $M$ is usually assumed to be without $\Z$-torsion or rather the torsion-free quotient of $M$ is used instead (so a module has the same regulator constant as its torsion free quotient). Since we constantly need to deal with modules with torsion, we will introduce a definition of regulator constants which is slightly different from the one in the literature: in particular we will add a term accounting explicitly for the torsion of the module under consideration, which is a ratio of orders of $\tors{M^H}$.  

Regulator constants have been introduced by T. and V. Dokchitser (see \cite{DokchitserDokchitser2009}) and later also used in a series of papers by other authors (see \cite{Bartel2012}, \cite{BarteldeSmit2013}, \cite{Bartel2014}). However, a closely related invariant was introduced some years before by Kani (see \cite{Kani1994}), but this work seems unfortunately to have gone unnoticed. Kani proved, among many other properties of these invariants, that, although they are not multiplicative in exact sequences, the defect from being multiplicative can be explicitly described in terms of cohomology. Moreover he also proved that these invariants are trivial for cohomologically trivial modules. We will give a more direct proof of these results for regulator constants in Section \ref{sec:rc}. Here our modified definition of regulator constants allows us to get neater statements.

The strategy of the proof of \eqref{eq:bcnf} presented in this paper is the following. On the one hand, the regulator constants is related to the right-hand term of \eqref{eq:bcnf}, as shown by Kani and Bartel independently (see Theorem \ref{thm:rcreg}). Bartel's proof is very direct and is essentially a linear algebra computation. On the other hand, we prove that regulator constants are also related to the left-hand side of \eqref{eq:bcnf} (see Theorem \ref{thm:rccln}). To do so, we consider the Tate exact sequence for units introduced by Ritter and Weiss (see \cite{RitterWeiss1996}), which generalizes to any finite set of primes $S$ the result obtained of Tate for $S$ large (see \cite{Tate1966}). More precisely, Ritter and Weiss showed that there exist two exact sequences of finitely generated $\Z[G]$-modules
\[
0\to \units{K}{S} \to A \to B \to \nabla\to 0
\]
\[
0\to \Cl{K}{S} \to \nabla \to \Z[S]\oplus \WW{S} \to \Z\to 0
\]
where $\units{K}{S}$ is the groups of $S$-units of $K$, $\Cl{K}{S}$ is the $S$-class groups of $K$, $A$ and $B$ are cohomologically trivial $\Z[G]$-module and $\WW{S}$ is an explicit semilocal $\Z[G]$-module whose exact definition will be recalled in Section \ref{sec:tes}. Using the properties of regulators constants mentioned above, we obtain a formula relating the regulator constants of units to the regulator constants of the class group and the latter is in turn related to the class numbers by the ambiguous class number formula. In fact the regulator constant of the class group differs from the ratio of class numbers by a cohomological term. This will cancel out with the regulator constants multiplicativity defect arising from the Ritter-Weiss exact sequences, allowing us to complete the proof of \eqref{eq:bcnf}. 
   
\begin{acknowledgment}
The author is very grateful to Filippo Nuccio for several useful suggestions on a preliminary version of the paper and to Cornelius Greither for the translation of Dirichlet's quote in the introduction.  
\end{acknowledgment}
\section{Regulator constants}\label{sec:rc}
In this section, after recalling the definition and the basic properties of regulator constants, we describe their behavior in exact sequences and show that regulator constants of cohomologically trivial modules are trivial. For the entire section, $G$ is a finite group, $\rcring$ is a principal ideal domain with field of fraction $\rcfield$.

\begin{definition}
A formal linear combination $\sum n_H H$ of subgroups of $G$ with coefficients in $\Z$ is a \emph{Brauer relation} in $G$ if the virtual permutation representation $\sum_H \Q[G/H]^{n_H}$ is zero. 
\end{definition}

\begin{notation}
Let $M$ be an $\rcring$-module. We write $\tors{M}$ for the submodule of torsion elements of $M$ and set $\mt{M}:=M/\tors{M}$. For any $\rcring$-homomorphism $f:M\to N$, we write $\mt{f}:\mt{M}\to \mt{N}$ for the induced homomorphism. 
\end{notation}

\begin{notation}
For a finitely generated torsion $\rcring$-module $M$, we denote by $\cont{M}$ the content of $M$ which is a fractional ideal of $\rcring$ (see \cite[Chap. VII, \S4, no. 5, Définition 4]{BourbakiAC}). This is the same as the product of the elementary divisors of $M$ or the product of the invariant factors of $M$. In particular, when $\rcring = \Z$, we have $\cont{M}=\card{M}\Z$, where $\card{M}$ is the order of $M$. Since every fractional ideal of $\rcring$ is principal, $\cont{M}$ can and will be identified with an element of $\rcfield^\times/\rcring^\times$. Note in particular that $\cont{M}^2\in \left(\rcfield^\times/\rcring^\times\right)^2 = \left(\rcfield^\times\right)^2/\left(\rcring^\times\right)^2\subseteq\rcfield^\times/\left(\rcring^\times\right)^2$. 
\end{notation}

\begin{notation}
Let $M$ be a finitely generated $\rcring[G]$-module. Then we set
\[
\dual{M}=\Hom_\rcring(M,\rcring)
\]  
for the $\rcring$-dual of $M$ which we regard as a $\rcring[G]$-module with the contragredient action. Moreover, if $H\leq G$, $M^H$ denotes the submodule of $M$ fixed by elements of $H$.  
\end{notation}

\begin{definition}\label{def:nd}
Let $M$ be a finitely generated $\rcring[G]$-module and let $\rcfieldext$ be an extension of $\rcfield$. We say that $M$ is self-dual over $\rcfieldext$ if there is a $\rcfieldext[G]$ isomorphism $M\otimes_\rcring \rcfieldext\cong \dual{M}\otimes_\rcring \rcfieldext$.
\end{definition}

\begin{remark}
One easily checks that being self-dual over $\rcfieldext$ is equivalent to the existence of $\rcring[G]$-homomorphism
\[
M \longrightarrow \dual{M} \otimes \rcfieldext = \Hom(M,\rcfieldext)
\] 
which is injective on $\mt{M}$. This is in turn equivalent to the existence of a $\rcring[G]$-bilinear pairing on $M$ with values in $\rcfieldext$ which is non degenerate on $\mt{M}$. 
\end{remark}

\begin{remark}
By the Noether--Deuring theorem (see \cite[Introduction, Exercise \S 6.6]{CurtisReiner1981}), $M$ is self-dual over $\rcfieldext$ if and only if it is self-dual over $\rcfield$. The reason for having $\rcfieldext$, instead of simply $\rcfield$, in the definition of a self-dual module is that in applications one considers $\Z[G]$-modules and the relevant pairings take value in $\rcfieldext = \R$ (see Example \ref{ex:rcexa}).

We will therefore avoid mentioning the field and simply say that a $\rcring[G]$-module $M$ is self-dual. This notion however should not be confused with the stronger notion of a unimodular $\rcring[G]$-module $M$, \ie one for which there exists an isomorphism $M\cong \dual{M}$.  
\end{remark}

\begin{remark}
A finitely generated $\Z[G]$-module $M$ is always self-dual over $\Q$ since the characters of $M~\otimes_\Z~\Q~$ and $\dual{M}~\otimes_\Z~\Q$ are conjugate, hence equal.  
\end{remark}

We will be using the following definition of regulator constants, which is a slight modification of the ones given in the literature (see \cite[Definition 2.13]{DokchitserDokchitser2009} \cite[Definition 2.3]{BarteldeSmit2013}, \cite[Definition 2.10]{Bartel2014}, \cite[Theorem 2.5]{Kani1994}). The term accounting for the torsion is introduced to get neater statements: for instance, with this definition, the regulator constants of a cohomologically trivial module are trivial. 

\begin{definition}\label{def:rc}
Assume that $\rcfield$ has characteristic coprime with $\card{G}$ and let $\rcfieldext$ be an extension of $\rcfield$. Let $M$ be a finitely generated $\rcring[G]$-module which is self-dual over $\rcfieldext$ and let $\langle\cdot,\cdot\rangle~:~M~\times~M~\to~\rcfieldext$ be a $\rcring[G]$-bilinear pairing that is non-degenerate on $\mt{M}$. Let $\Theta=\sum_{H\leq G} n_H H$ be a Brauer relation in $G$. Then the regulator constant $\rc{\Theta}{M}$ of $M$ with respect to $\Theta$ is
\[
\rc[\rcring]{\Theta}{M} = \prod_{H\leq G}\left( \frac{1}{\cont{\tors{M}^H}^2}\det\left(\frac{1}{\card{H}}\left.\langle\cdot,\cdot\rangle \right|_{\mt{M^H}} \right)\right)^{n_H} \in \rcfield^\times/\left(\rcring^\times\right)^2,
\]   
where the determinant appearing in the factor corresponding to the subgroup $H$ is evaluated on any $\rcring$-basis of $\mt{M^H}$.
\end{definition}

Some checks are needed to ensure regulator constants are well-defined. First of all, if the pairing is non degenerate on $\mt{M}$, then it is also non degenerate on $\mt{M^H}$ for any $H\leq G$ (see \cite[Lemma 2.15]{DokchitserDokchitser2009}). Moreover, the definition is independent of the choice of the pairing and the field extension $\rcfieldext$ (see \cite[Theorem 2.17]{DokchitserDokchitser2009}). In particular, since one can always choose a $\rcfield$-valued non degenerate pairing, $\rc[\rcring]{\Theta}{M}$ indeed lies in $\rcfield^\times/\left(\rcring^\times\right)^2$.

\begin{remark}
The definition of $\delta_\rcring(\Theta,M)$ in \cite{Kani1994} is very similar to the one of $\rc[\rcring]{\Theta}{M}$ given above:
\[
\delta_\rcring(\Theta,M) = \prod_{H\leq G}\left( \frac{1}{\cont{\tors{M}^H}}\det\left(\frac{1}{\card{H}}\left.\langle\cdot,\cdot\rangle \right|_{\mt{M^H}} \right)\right)^{n_H} \in \rcfield^\times/\rcring^\times
\] 
for any pairing $\langle\cdot, \cdot\rangle$ as in Definition \ref{def:rc} (see \cite[Theorem 2.5]{Kani1994}). The main difference is that in Definition \ref{def:rc} we have $\cont{\tors{M}^H}^2$ instead of $\cont{\tors{M}^H}$. Note that some of the results in \cite{Kani1994} are not correct in general if we use Kani's definition but become true using the one given here (see the remark before Proposition \ref{prop:rces}). Correcting Kani's definition with the square of the content, we see that $\delta_\rcring(\Theta, M)$ is just the image of $\rc[\rcring]{\Theta}{M}$ under the homomorphism $\rcfield^\times/\left(\rcring^\times\right)^2 \to \rcfield^\times/\rcring^\times$. 
\end{remark}

\begin{example}
It is immediate to check that free $\rcring[G]$-modules are self-dual and have trivial regulator constants (this also follows from a more general result on regulator constants of permutation modules, see \cite[Example 2.19]{DokchitserDokchitser2009}). 
\end{example}

\begin{example}\label{ex:rcexa}
A more interesting example is the following (see \cite[Theorem 8.1]{Kani1994}, \cite[Lemma 2.12]{Bartel2012}). Let $K/k$ be a Galois extension of number fields with Galois group $G$ and let $\units{K}{S}$ be the group of $S$-units of $K$ where $S$ is a finite set of primes of $K$ containing the archimedean primes. This is a $\Z[G]$-module and one can define a $G$-invariant pairing on $\units{K}{S}$ with values in $\R$ by
\[
\langle u,u'\rangle =\sum_{w\in S} d_{w}\log|u|_w\log|u'|_w \quad\textrm{for $u,u'\in\units{K}{S}$}
\]  
where $d_{w}=[K_w:\Q_v]$, $K_w$ is the completion of $K$ at $w$ and $\Q_v$ is the completion of $\Q$ at the valuation $v$ extended by $w$. The absolute value $|\cdot|_w$ is normalized in such a way that its restriction to $\Q$ coincides with the usual $p$-adic or archimedean absolute value of $\Q$. One shows that the determinant of this pairing (computed on a basis of $\mt{\units{K}{S}}$) equals the square of the $S$-regulator of $K$ times an explicit positive rational number. This already suggests that regulator constants of the groups of units should be related to regulators in general.  
\end{example}

\begin{remark}
Although we will not use the notion of factorisability here, we remark that, using our definition, Bartel's result relating factorisability to regulator constants just says that two finitely generated $\Z[G]$-modules $M$ and $N$ with $M\otimes \Q \cong N\otimes \Q$ are factor equivalent if and only if they have the same regulator constant for every Brauer relation in $G$ (see \cite[Corollary 2.12]{Bartel2014}). 
\end{remark}

We now list some of the main elementary properties of the regulator constants. We will need the following notation.

\begin{notation}
Let $\Theta = \sum_{H\leq G}n_H H$ be a Brauer relation in $G$. 
\begin{itemize}
\item If $G$ is a subgroup of a finite group $X$, then set  
\[
\Ind_{G}^{X}\Theta =  \sum_{H\leq G} n_H H
\]
where now the groups $H$ are regarded as subgroups of $X$. Then $\Ind_{G}^{X}\Theta$ is a Brauer relation in $G'$ (by transitivity of induction).
\item If $Y$ is a subgroup of $G$, then set 
\[
\Res_{Y}^{G}\Theta =  \sum_{H\leq G} \sum_{g\in H\backslash G/Y} n_H(Y\cap H^{g^{-1}})
\]
where, for an element $g\in G$ and a subgroup $H\leq G$, we set $H^g = gHg^{-1}$. Then $\Res_{Y}^{G}\Theta$ is a Brauer relation in $G$ (by Mackey's decomposition).
\item If $G$ is a quotient of a finite group $Z$, then set 
\[
\Inf_{G}^{Z}\Theta =  \sum_{H\leq G} n_H H'
\]
where $H'$ is the inverse image of $H$ under the quotient map $Z\to G$. Then $\Inf_{G}^{X}\Theta$ is a Brauer relation in $X$.
\end{itemize}
\end{notation}

\begin{proposition}\label{prop:rcprop}
Let $\Theta$ be a Brauer relation in $G$.
\begin{enumerate}[label = (\roman*)]
\item\label{prop:rcpropind} If $G$ is a subgroup of a group $X$ and $N$ is a finitely generated self-dual $\rcring[X]$-module, then 
\[
\rc[\rcring]{\Ind_{G}^{X}\Theta}{N} = \rc[\rcring]{\Theta}{\Res_G^{X} N}.
\]
\item\label{prop:rcpropres} If $Y$ is a subgroup of $G$ and $N$ is a finitely generated self-dual $\rcring[X]$-module, then 
\[
\rc[\rcring]{\Res_{Y}^{G}\Theta}{N} = \rc[\rcring]{\Theta}{\Ind_{Y}^G N}.
\]
\item\label{prop:rcpropinf} If $G$ is a quotient of a group $Z$ and $M$ is a finitely generated self-dual$\Z[G]$-module, then
\[
\rc[\rcring]{\Inf_G^{Z}\Theta}{\Inf_G^{Z}M} = \rc[\rcring]{\Theta}{M}. 
\]
\item\label{prop:rcpropext} If $\rcringalt$ is a principal ideal domain containing $\rcring$, with field of fraction $\rcfieldalt$ and $M$ is a finitely generated self-dual $\rcring[G]$-module, then $\rc[\rcringalt]{\Theta}{M\otimes_\rcring\rcringalt}$ is the image of $\rc[\rcring]{\Theta}{M}$ under the map $\rcfield^\times/\left(\rcring^\times\right)^2 \to \rcfieldalt^\times/\left(\rcringalt^\times\right)^2$.
\end{enumerate}
\end{proposition}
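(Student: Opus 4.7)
All four parts amount to verifying that the expression defining $\rc[\rcring]{\Theta}{M}$ transforms correctly under the operations on $\Theta$ and $M$. My plan is to handle them in order of increasing work: (i) and (iii) are essentially tautological (the latter using the Brauer relation once); (iv) is a base-change compatibility check; and (ii) is the Mackey-theoretic main calculation.

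Part (i) is immediate: the subgroups $H\leq G$ indexing both products agree, and for $H\leq G\leq X$ the submodule $N^H$, its torsion and its rank depend only on $N$ as an $\rcring[H]$-module. Any pairing witnessing self-duality of $N$ as an $\rcring[X]$-module is a fortiori $\rcring[G]$-invariant, so all factors match term by term. Part (iii) is almost as easy: writing $K=\ker(Z\twoheadrightarrow G)$, one has $(\Inf_G^Z M)^{H'}=M^H$ with identical torsion and rank $d_H$, and the same pairing descends through the quotient. The only discrepancy is the factor $1/|H'|^{d_H}=1/(|K|\cdot|H|)^{d_H}$ in the determinant, producing a global factor $|K|^{-\sum_H n_H d_H}$. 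This vanishes by the Brauer-relation identity $\sum_H n_H\dim_\rcfield V^H=0$, valid for any finite-dimensional $\rcfield[G]$-module $V$ because characters of permutation representations are integer-valued and $\operatorname{char}(\rcfield)\nmid|G|$ (so the class of $\rcfield[G/H]$ in $R_\rcfield(G)$ is determined by its character), applied to $V=M\otimes_\rcring\rcfield$.

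For part (iv) I would check in turn that: (a) $\rcringalt\supseteq\rcring$ is torsion-free over the PID $\rcring$, hence flat, so $(M\otimes\rcringalt)^H=M^H\otimes\rcringalt$; (b) a finitely generated $\rcring$-module decomposes as torsion plus free, stably under flat base change, yielding $\tors{M\otimes\rcringalt}=\tors{M}\otimes\rcringalt$; (c) the content of a finitely generated torsion $\rcring$-module, read from its elementary divisors, has the same image in $\rcfieldalt^\times/\rcringalt^\times$; and (d) choosing a $\rcfield$-valued self-duality pairing on $M$ (always possible), the base-changed pairing on $M\otimes\rcringalt$ has the same matrix on corresponding bases, hence the same determinant. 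Matching factor by factor then proves the claim.

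Part (ii) is the main technical step. By Mackey's formula,
\[
(\Ind_Y^G N)^H \;\cong\; \bigoplus_{g\in H\backslash G/Y} N^{Y\cap H^{g^{-1}}},
\]
each summand embedded into $\Ind_Y^G N=\rcring[G]\otimes_{\rcring[Y]}N$ via $n\mapsto\sum_{h\in H/(H\cap gYg^{-1})}[hg]\otimes n$. On $\Ind_Y^G N$ I take the pairing
\[
\langle[g]\otimes n,\,[g']\otimes n'\rangle := \delta_{gY,g'Y}\,\langle n,\,g^{-1}g'n'\rangle_N,
\]
which is well-defined by $Y$-invariance of the pairing on $N$, is $G$-equivariant, and is non-degenerate on $\mt{\Ind_Y^G N}$ whenever the pairing on $N$ is non-degenerate on $\mt{N}$. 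Cross-terms across distinct double cosets vanish, so the restriction to $(\Ind_Y^G N)^H$ is block-diagonal, and a direct computation shows that on the $g$-block it equals $[H:H\cap gYg^{-1}]$ times the pairing on $N^{Y\cap H^{g^{-1}}}$. Combined with the normalization $\tfrac{1}{|H|}$ and the identity $|H|=[H:H\cap gYg^{-1}]\cdot|Y\cap H^{g^{-1}}|$, the determinant on each block reduces precisely to the corresponding determinant factor of $\rc[\rcring]{\Res_Y^G\Theta}{N}$. The content terms match via $\tors{\Ind_Y^G N}^H=\bigoplus_g\tors{N}^{Y\cap H^{g^{-1}}}$ and multiplicativity of the content on direct sums. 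The principal obstacle is the pairing computation, especially keeping track of which cross-terms survive and how the normalization factors combine across the double-coset indexing.
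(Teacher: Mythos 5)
Your proposal is correct, and for the part of the argument that the paper actually writes out --- the content (torsion) factor --- it follows the same route: Mackey decomposition of $\Ind_Y^G N$ into summands $\Ind_{H\cap Y^g}^H N^g$, Shapiro's lemma to identify $H$-invariants of each summand with $N^{Y\cap H^{g^{-1}}}$, and multiplicativity of the content on direct sums. The genuine difference is one of self-containedness: the paper splits $\rc[\rcring]{\Theta}{M}$ into its determinant factor and its content factor, proves only the latter, and delegates the determinant factor to Dokchitser--Dokchitser (Proposition 2.45 and Corollary 2.18 of their paper) and part \ref{prop:rcpropext} to Kani, whereas you reprove the determinant statements from scratch. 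Your explicit pairing $\langle[g]\otimes n,[g']\otimes n'\rangle=\delta_{gY,g'Y}\langle n,g^{-1}g'n'\rangle_N$ on the induced module, the block-diagonality across double cosets, and the bookkeeping identity $\card{H}=[H:H\cap gYg^{-1}]\cdot\card{Y\cap H^{g^{-1}}}$ all check out, as does the use of $\sum_H n_H\dim_{\rcfield}V^H=0$ in part \ref{prop:rcpropinf} (this is the same character identity the paper invokes elsewhere via \cite[Example 2.30]{DokchitserDokchitser2009}). What your version buys is a proof that does not depend on the cited external computations; what it costs is that you must verify by hand that the induced pairing is nondegenerate on $\mt{\Ind_Y^G N}$ and that the block decomposition is compatible with passing to torsion-free quotients --- both of which you address, at least implicitly, and both of which are unproblematic since torsion and torsion-free quotients commute with the direct sum decomposition. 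One cosmetic point: in parts \ref{prop:rcpropind} and \ref{prop:rcpropres} you correctly read $N$ as a module over $\rcring[G]$ and $\rcring[Y]$ respectively (the statement's ``$\rcring[X]$-module'' in \ref{prop:rcpropres} is evidently a typo), which is the intended reading.
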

\begin{proof}
Since taking torsion commutes with restriction, induction, inflation and scalar extension ($\rcringalt$ is $\rcring$-flat), we can prove separately the assertions for the determinants factor and the content factor in the definition of regulator constants. For the determinants factor, the proof is given in \cite[Proposition 2.45]{DokchitserDokchitser2009} and \cite[Corollary 2.18]{DokchitserDokchitser2009}. 

Here we prove the assertions for the content factor and, by the initial remark, we can assume that all modules are $\rcring$-torsion. Assertions \ref{prop:rcpropind} and \ref{prop:rcpropinf} are immediate to check. For \ref{prop:rcpropres}, observe that by Mackey's decomposition we can write
\[
\Ind_{Y}^G N \cong \bigoplus_{g\in H\backslash G/Y} \Ind_{H\cap Y^g}^H N^g
\]	
as $\rcring[H]$-modules, where $N^g$ is the $\rcring[Y]$-module with the action of $Y$ defined by $y\cdot n = (g^{-1}yg)n$ for $y\in Y$ and $n\in N$. By Shapiro's lemma, for every $H\leq G$ we have
\[
\left(\Ind_{H\cap Y^g}^H N^g\right)^H \cong \left(N^g\right)^{H\cap Y^g} \cong N^{Y\cap H^{g^{-1}}}.
\] 
Therefore (using that the content is multiplicative on direct sums, see \cite[Chap. VII, \S4, no. 5, Proposition 10]{BourbakiAC})
\[
\prod_{H \leq G} \cont{(\Ind_{Y}^G N)^H}^{n_H} = 
\prod_{H\leq G} \prod_{g\in H\backslash G/Y} \cont{N^{Y\cap H^{g^{-1}}}}^{n_H},
\]
which implies \ref{prop:rcpropres}. Assertion \ref{prop:rcpropext} follows from \cite[Remark 3.1]{Kani1994}.
\end{proof}

Mainly to simplify the treatment, we will now abandon the generality of a principal ideal domain to restrict our attention to the case $\rcring=\Z$, which will be the one of interest in the rest of the paper. We will drop the reference to the ring in the regulator constants, $\rc[\Z]{\Theta}{M}= \rc{\Theta}{M}$, to lighten notation. Note that in this case 
\[
\rc{\Theta}{M}= \prod_{H\leq G}\left( \frac{1}{\card{\tors{M}^H}^2}\det\left(\frac{1}{\card{H}}\langle\cdot,\cdot\rangle | \mt{M^H} \right)\right)^{n_H} \in \Q^\times
\] 
is a rational number.

The following proposition is basically \cite[Theorem 2.10]{Kani1994}, for which we will give a more direct proof here. Considering the case where the three modules are torsion, one immediately realizes that, for the statement to be correct, the square of the order of the torsion part is needed in the definition of the regulator constants. 

\begin{notation}
For a $\Z[G]$-module module $M$ and $i\in\mathbb{N}$, we denote by $\HH{i}{G}{M}$ the $i$-th cohomology group of $G$ with coefficients in $M$. Its order, if finite, will be denoted by $\hh{i}{G}{M}$. We will also occasionally need Tate cohomology groups $\HHT{i}{G}{M}$ (defined for all $i\in\Z$) and their orders $\hht{i}{G}{M}$.
\end{notation}

\begin{proposition}\label{prop:rces}
Let 
\[
0\to M' \stackrel{f}{\longrightarrow} M \stackrel{g}{\longrightarrow} M'' \to 0
\]
be an exact sequence of finitely generated $\Z[G]$-modules. For any Brauer relation $\Theta~=~\sum_{H\leq G} n_H H$ in $G$ we have
\[
\rc{\Theta}{M} = \rc{\Theta}{M'}\rc{\Theta}{M''}\kd{\Theta}{f}^2
\]
where
\[
\kd{\Theta}{f} =\prod_{H\leq G}\card{\Ker\left(\HH{1}{H}{f}:\HH{1}{H}{M'}\to\HH{1}{H}{M}\right)}^{n_H}.
\]
In particular, if the exact sequence splits, we have $\rc{\Theta}{M} = \rc{\Theta}{M'}\rc{\Theta}{M''}$.
\end{proposition}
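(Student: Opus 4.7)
The strategy I would follow is to exploit the flexibility in the choice of pairing and work with one that is orthogonal with respect to a rational splitting of the given exact sequence, thereby reducing the problem at each subgroup $H$ to a single index computation for full-rank lattices in a $\Q$-vector space. Since $\Q[G]$ is semisimple, I pick a $\Q[G]$-equivariant section $\sigma\colon M''\otimes\Q\to M\otimes\Q$ of $g\otimes\Q$, producing a decomposition $M\otimes\Q\cong(M'\otimes\Q)\oplus(M''\otimes\Q)$. Using that $\Z[G]$-modules are automatically self-dual over $\Q$, I choose non-degenerate $\Q$-valued pairings $\langle\cdot,\cdot\rangle'$ on $M'$ and $\langle\cdot,\cdot\rangle''$ on $M''$, and define the pairing on $M$ as their orthogonal direct sum via $\sigma$. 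This pairing is non-degenerate on each $\mt{M^H}$, and by the independence of $\rc{\Theta}{M}$ from the choice of pairing it suffices to compute with this specific one.

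Fix $H\leq G$. Put $V_1=(M')^H\otimes\Q$, $V_2=(M'')^H\otimes\Q$, $L=\mt{M^H}$, $L_1=\mt{(M')^H}\subseteq V_1$ and $L_2=\sigma(\mt{(M'')^H})\subseteq V_2$; all sit as full-rank lattices in $M^H\otimes\Q=V_1\oplus V_2$. Block-diagonality of the pairing and additivity of ranks give, by the change-of-basis formula,
\[
\det\bigl(\tfrac{1}{\card{H}}\langle\cdot,\cdot\rangle|_{L}\bigr) = [L_1\oplus L_2:L]^{2}\cdot\det\bigl(\tfrac{1}{\card{H}}\langle\cdot,\cdot\rangle'|_{L_1}\bigr)\cdot\det\bigl(\tfrac{1}{\card{H}}\langle\cdot,\cdot\rangle''|_{L_2}\bigr),
\]
where $[L_1\oplus L_2:L]$ denotes the generalized (rational) index of full-rank lattices. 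The entire proposition then reduces to the key identity
\[
[L_1\oplus L_2:L]=\frac{\card{\tors{M^H}}\cdot k_H}{\card{\tors{(M')^H}}\cdot\card{\tors{(M'')^H}}},\qquad k_H:=\card{\Ker(\HH{1}{H}{f})}:
\]
substituting and dividing each factor by the squared torsion contribution from Definition \ref{def:rc} leaves precisely the defect $k_H^{2}$, and the product over $H$ with weights $n_H$ yields $\kd{\Theta}{f}^{2}$. The split case is then immediate, since a splitting of the original sequence induces splittings on cohomology, forcing $k_H=1$ for all $H$.

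To prove the index identity, the long exact $H$-cohomology sequence identifies $K_H:=(M'')^H/g(M^H)$ with $\Ker(\HH{1}{H}{f})$ and yields two short exact sequences $0\to(M')^H\to M^H\to N_H\to 0$, with $N_H:=g(M^H)$, and $0\to N_H\to(M'')^H\to K_H\to 0$. A careful torsion analysis of the first shows that $L\cap V_1$ contains $L_1$ with quotient isomorphic to $\tors{N_H}/g(\tors{M^H})$, of order $\card{\tors{N_H}}\card{\tors{(M')^H}}/\card{\tors{M^H}}$, and that the projection $\pi_{2}(L)$ is exactly $\mt{N_H}$; the second sequence gives $[\mt{(M'')^H}:\mt{N_H}]=k_H\card{\tors{N_H}}/\card{\tors{(M'')^H}}$. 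Covolume multiplicativity in the short exact sequence of lattices $0\to L\cap V_1\to L\to \mt{N_H}\to 0$ then combines these two contributions, the $\card{\tors{N_H}}$ factors cancel, and the stated formula for $[L_1\oplus L_2:L]$ drops out. The main obstacle is precisely this torsion bookkeeping, and the computation makes transparent the need for the square $\cont{\tors{M^H}}^{2}$ in Definition \ref{def:rc}: it is exactly the $\card{\tors{M^H}}^{2}$ coming from $[L_1\oplus L_2:L]^{2}$ that must be absorbed.
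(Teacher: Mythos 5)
Your proof is correct, but it takes a genuinely different route from the paper. The paper invokes the Bartel--de Smit homological description of the regulator constant, $\rc{\Theta}{M}$ as a ratio of kernels and cokernels of $\Hom_G(\phi,M)$ and $\Hom_G(\phi^{\mathrm{Tr}},M)$ for an injection $\phi:P_1\to P_2$ of the permutation modules attached to $\Theta$; the defect $\kd{\Theta}{f}$ then falls out of a snake-lemma chase through the four-term rows $(P_i,M')\to(P_i,M)\to(P_i,M'')\to C(P_i,f)$, with $C(P_i,f)$ identified with the kernels on $\HH{1}{H}{-}$ via Shapiro's lemma. You instead exploit the freedom in the choice of pairing: semisimplicity of $\Q[G]$ gives a rational splitting, the orthogonal-sum pairing block-diagonalizes the Gram matrix, and everything reduces to the generalized index $[L_1\oplus L_2:L]$, which you evaluate correctly from the two short exact sequences $0\to(M')^H\to M^H\to N_H\to 0$ and $0\to N_H\to(M'')^H\to K_H\to 0$ extracted from the long exact sequence of $H$-fixed points (I checked the torsion bookkeeping: $(L\cap V_1)/L_1\cong\tors{N_H}/g(\tors{M^H})$ and $[\mt{(M'')^H}:\mt{N_H}]=k_H\card{\tors{N_H}}/\card{\tors{(M'')^H}}$ are both right, and the $\card{\tors{N_H}}$ factors cancel as you say). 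Your argument is more elementary and makes completely transparent why the \emph{square} of the torsion content is needed in Definition \ref{def:rc}, whereas the paper's route keeps that point implicit; the cost is that you must invoke pairing-independence of the regulator constant and a $\Q[G]$-equivariant section, neither of which the paper's purely homological computation needs at this stage. Both proofs are valid.
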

\begin{proof}
We are going to use a characterization of the regulator constants introduced by Bartel and de Smit. Set
\[
P_1 = \bigoplus_{H\leq G, n_H > 0} \Z[G/H]^{n_H}, \qquad P_2 = \bigoplus_{H\leq G, n_H < 0} \Z[G/H]^{-n_H}.
\]
Since $\Theta$ is a Brauer relation in $G$, one can find an injective $\Z[G]$-homomorphism $\phi:P_1 \to P_2$ with finite cokernel. Taking duals and fixing isomorphisms $P_i \cong \dual{P_i}$ one also gets an injective $\Z[G]$-homomorphism $\phi^{\mathrm{Tr}}:P_2\to P_1$ with finite cokernel. Consider the bifunctor $\Hom_G(\cdot, \cdot)$, abbreviated $(\cdot, \cdot)$: applying $(\cdot, M)$ to $\phi$ and $\phi^{\mathrm{Tr}}$ we get $\Z[G]$-homomorphisms
\[
(\phi, M) : (P_2,M) \to (P_1,M) \quad \textrm{and} \quad (\phi^{\mathrm{Tr}}, M) : (P_1,M) \to (P_2,M).
\] 
Then by \cite[Lemma 3.1]{BarteldeSmit2013} we have
\begin{equation}\label{eq:rcformula}
\rc{\Theta}{M}=\frac{\card{\Coker(\phi^{\mathrm{Tr}}, M}/\card{\Ker(\phi^{\mathrm{Tr}}, M)}}{\card{\Coker(\phi, M)}/\card{\Ker(\phi, M)}}.
\end{equation}
Now consider the following diagram with exact rows
\[
\begin{tikzcd}
0 \arrow[r]& (P_2,M') \arrow[r]\arrow[d,"(\phi{,}M')"]& (P_2,M) \arrow[r]\arrow[d,"(\phi{,}M)"]& (P_2,M'') \arrow[r]\arrow[d,"(\phi{,}M'')"] &C(P_2,f)\arrow[r]\arrow[d,"(\phi{,}M')^1"]& 0\\
0 \arrow[r]& (P_1,M') \arrow[r]& (P_1,M) \arrow[r]& (P_1,M'') \arrow[r] &C(P_1,f)\arrow[r]& 0
\end{tikzcd}
\]
where
\[
C(P_i,f) = \Ker\left(\Ext{1}{G}{P_i}{f}:\Ext{1}{G}{P_i}{M'}\to \Ext{1}{G}{P_i}{M}\right)
\]
and $(\phi,M')^1$ is the restriction of the map $\Ext{1}{G}{P_2}{M'}\to \Ext{1}{G}{P_1}{M}$ induced by $\phi$. 
Observe that by Shapiro's lemma (see \cite[Corollary 2.8.4]{Benson1991}) 
\[
\Ext{1}{G}{P_1}{M} \cong \bigoplus_{H\leq G, n_H > 0}\Ext{1}{G}{\Z[G/H]}{M}^{n_H}\cong \bigoplus_{H\leq G, n_H > 0}\Ext{1}{H}{\Z}{M}^{n_H} = \bigoplus_{H\leq G, n_H > 0}\HH{1}{H}{M}^{n_H}
\]
and similarly for $P_2$.
In particular, since these groups are finite, we have
\[
\frac{\card{\Coker(\phi, M')^1}}{\card{\Ker(\phi, M')^1}} = \frac{\card{C(P_1,f)}}{\card{C(P_2,f)}} = \kd{\Theta}{f}.
\]
Splitting the above diagram in two diagrams, each having 3-term short exact sequences as rows, and applying the snake lemma to each of them, we deduce that 
\[
\frac{\card{\Coker(\phi, M)}}{\card{\Ker(\phi, M)}} = \frac{\card{\Coker(\phi, M')}}{\card{\Ker(\phi, M')}}\frac{\card{\Coker(\phi, M'')}}{\card{\Ker(\phi, M'')}}\kd{\Theta}{f}^{-1}.
\] 
A similar argument applied to $(\phi^\mathrm{Tr},M)$ implies
\[
\frac{\card{\Coker(\phi^\mathrm{Tr}, M)}}{\card{\Ker(\phi^\mathrm{Tr}, M)}} = \frac{\card{\Coker(\phi^\mathrm{Tr}, M')}}{\card{\Ker(\phi^\mathrm{Tr}, M')}}\frac{\card{\Coker(\phi^\mathrm{Tr}, M'')}}{\card{\Ker(\phi^\mathrm{Tr}, M'')}}\kd{\Theta}{f}
\] 
concluding the proof.
\end{proof}

\begin{remark}
Kani's defect $\kd{\Theta}{f}$ for a specific $f$ appears in \cite{Bartel2012}, where the exact sequence under consideration is the one involving $S$-units of a number fields and their torsion
\[
0\to \roots{K}\to\units{K}{S}\to \mt{\units{K}{S}}\to 0
\]
and $\kd{\Theta}{f}$ is called $\lambda$. Note also that \eqref{eq:rcformula}, the main ingredient of our proof, is analogous to the recipe to compute Kani's discriminant given in \cite[Remark 5.17]{Kani1994}. 
\end{remark}

The statement of the following proposition is the same as \cite[Theorem 2.9]{Kani1994} but we give here a more direct proof. Again our modified definition of regulator constants is necessary for the statement to be correct in general. 

\begin{proposition}\label{prop:rcct}
Let $M$ be a finitely generated cohomologically trivial $\Z[G]$-module. Then for any Brauer relation $\Theta$ in $G$ we have $\rc{\Theta}{M}=1$.
\end{proposition}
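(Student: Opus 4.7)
The plan is to reduce the statement to the case of a finitely generated projective $\Z[G]$-module via a projective resolution, and then to handle that case by a direct application of the Bartel--de Smit formula \eqref{eq:rcformula} from the proof of Proposition \ref{prop:rces}. Since $M$ is finitely generated and cohomologically trivial, a classical result of Rim ensures that $M$ has projective dimension at most one over $\Z[G]$, so one obtains a two-term projective resolution
\[
0 \longrightarrow P_1 \longrightarrow P_0 \longrightarrow M \longrightarrow 0
\]
with $P_0,P_1$ finitely generated projective $\Z[G]$-modules. Concretely, any surjection of a finitely generated free $\Z[G]$-module onto $M$ has kernel that is $\Z$-free and cohomologically trivial, hence projective by Rim's theorem.

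Applying Proposition \ref{prop:rces} to this resolution, the defect $\kd{\Theta}{P_1\hookrightarrow P_0}$ is trivial because $\HH{1}{H}{P_1}=0$ for every subgroup $H\leq G$ (the restriction of $P_1$ to any $H$ remains projective over $\Z[H]$). Hence $\rc{\Theta}{M}=\rc{\Theta}{P_0}/\rc{\Theta}{P_1}$, and the problem is reduced to showing $\rc{\Theta}{P}=1$ for any finitely generated projective $\Z[G]$-module $P$. The case of free $P$ is handled by the example given right after Definition \ref{def:rc}.

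For a general projective $P$, the plan is to apply \eqref{eq:rcformula} directly. Since $P$ is $\Z$-torsion-free and the cokernels $C_1=\Coker(\phi)$, $C_2=\Coker(\phi^{\mathrm{Tr}})$ are finite, the groups $\Hom_G(C_i,P)$ vanish and the formula specialises to $\rc{\Theta}{P}=\card{\Ext{1}{G}{C_2}{P}}/\card{\Ext{1}{G}{C_1}{P}}$. Using the universal coefficient spectral sequence together with the natural isomorphism $\Ext^1_\Z(C,P)\cong P\otimes_\Z C^\vee$ (valid for $P$ $\Z$-free and $C$ finite) one identifies $\Ext{1}{G}{C_i}{P}\cong (P\otimes_\Z C_i^\vee)^G$; dualising $0\to P_1\to P_2\to C_1\to 0$ over $\Z$ yields $C_2\cong C_1^\vee$. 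Thus the triviality of $\rc{\Theta}{P}$ boils down to the symmetry $\card{(P\otimes_\Z C)^G}=\card{(P\otimes_\Z C^\vee)^G}$ for any finite $\Z[G]$-module $C$.

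This last symmetry is the main obstacle. The plan is to settle it by observing that $P\otimes_\Z C$ is cohomologically trivial (for $P$ free this follows from $\Z[G]\otimes_\Z C\cong\Ind_{\{1\}}^G C$ by Shapiro, and the property passes to direct summands), and then exploiting the Pontryagin pairing between $(P\otimes_\Z C)^G$ and $(\dual{P}\otimes_\Z C^\vee)^G$, reduced to the free case by a complementary-summand argument $P\oplus Q\cong\Z[G]^n$ in which the computation $(\Z[G]^n\otimes_\Z C)^G\cong C^n$ makes the symmetry manifest.
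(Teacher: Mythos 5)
Your overall architecture is the same as the paper's: reduce to the projective case via a length-one projective resolution $0\to P_1\to P_0\to M\to 0$ (the existence of which is the Rim/Nakayama theorem the paper cites from Serre), observe that the defect $\kd{\Theta}{\cdot}$ vanishes because $\HH{1}{H}{P_1}=0$, and then treat projectives separately. That part is correct. The divergence — and the gap — is in how you handle a non-free projective $P$. Your reductions via \eqref{eq:rcformula} are fine: $\Ker(\phi,P)=\Ker(\phi^{\mathrm{Tr}},P)=0$, the cokernels are $\Ext{1}{G}{C_i}{P}\cong(P\otimes_\Z C_i^\vee)^G$, and $C_2\cong C_1^\vee$, so everything does come down to the symmetry $\card{(P\otimes_\Z C)^G}=\card{(P\otimes_\Z C^\vee)^G}$ for $C$ finite. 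That symmetry is true, but the argument you sketch for it does not close. Pontryagin duality plus cohomological triviality of $P\otimes_\Z C$ gives $\card{(P\otimes_\Z C)^G}=\card{((P\otimes_\Z C)^\vee)_G}=\card{(\dual{P}\otimes_\Z C^\vee)^G}$, which relates $P$ to $\dual{P}$, not $P$ to itself; and the complementary-summand relation $P\oplus Q\cong\Z[G]^n$ only yields the product identity $\card{(P\otimes_\Z C)^G}\cdot\card{(Q\otimes_\Z C)^G}=\card{C}^n=\card{(P\otimes_\Z C^\vee)^G}\cdot\card{(Q\otimes_\Z C^\vee)^G}$. Writing $g(P)=\card{(P\otimes_\Z C)^G}/\card{(P\otimes_\Z C^\vee)^G}$, these two facts give $g(P)=g(\dual{P})^{-1}$ and $g(P)=g(Q)^{-1}$, from which $g(P)=1$ does not follow: nothing you have used rules out a projective summand contributing a nontrivial factor that is cancelled by its complement.

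The missing input is Swan's theorem: a finitely generated projective $\Z[G]$-module is locally free, i.e.\ $P\otimes_\Z\Z_{(p)}$ is $\Z_{(p)}[G]$-free for every prime $p$. With it your symmetry is immediate (decompose $C$ into its $p$-primary parts; only the $p$-part contributes to the $p$-part of $\card{(P\otimes_\Z C)^G}$, and there $P$ may be replaced by a free module), but at that point you may as well argue as the paper does: $\rc[\Z_{(p)}]{\Theta}{P\otimes_\Z\Z_{(p)}}$ is trivial because the module is free, and Proposition \ref{prop:rcprop}\ref{prop:rcpropext} then shows the $p$-adic valuation of $\rc{\Theta}{P}$ vanishes for every $p$. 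So either cite Swan explicitly to finish your computation, or replace your treatment of the projective case by the localization argument; as written, the last step of your proof is not a proof.
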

\begin{proof}
We first prove the assertion when $M$ is a finitely generated projective $\Z[G]$-module. For a prime $p$, let $\Z_{(p)}$ denote the localization of $\Z$ at $(p)$. Then $M\otimes_\Z \Z_{(p)}$ is $\Z_{(p)}[G]$-free for every prime $p$ by a theorem of Swan (see \cite{Swan1960}, \cite[Theorem 32.11]{CurtisReiner1981}). In particular $\rc{\Theta}{M\otimes_\Z \Z_{(p)}}$ is trivial. By Proposition \ref{prop:rcprop} we deduce that the $p$-adic valuation of $\rc{\Theta}{M}$ is $0$ for every prime $p$, which means that $\rc{\Theta}{M}=1$.

Let now $M$ be a finitely generated cohomologically trivial $\Z[G]$-module. Then there is an exact sequence of finitely generated $\Z[G]$-modules
\[
0 \to Q_2 \to Q_1 \to M \to 0
\]   
where the $Q_i$ are projective (see \cite[Chapter IX, Theorem 8]{Serre2013}). We then conclude by Proposition \ref{prop:rces}.
\end{proof}

Combining Propositions \ref{prop:rces} and \ref{prop:rcct}, one immediately gets the following corollary (this is to be compared with \cite[Corollary 2.11]{Kani1994}, where, however, the exponent in the definition of $\chi$ needs to be corrected). 
\begin{corollary}\label{cor:rces}
Let 
\[
0 \to M' \to M_r \to \cdots \to M_1 \to M'' \to 0
\]
be an exact sequence of finitely generated $\Z[G]$-modules. If the $M_i$ are cohomologically trivial, then for any Brauer relation $\Theta=\sum_{H\leq G} n_H H$ we have
\[
\rc{\Theta}{M'} = \rc{\Theta}{M''}^{(-1)^r} \prod_{H\leq G}\left(\prod_{i=1}^r \hh{i}{H}{M'}^{(-1)^{i}}\right)^{2n_H}.
\]
\end{corollary}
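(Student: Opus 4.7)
The plan is to decompose the long exact sequence into short exact sequences and iterate Propositions~\ref{prop:rces} and~\ref{prop:rcct}. Concretely, I would set $N_0 = M''$, $N_r = M'$, and for $1 \leq j \leq r-1$ let $N_j$ be the image of $M_{j+1}\to M_j$, which equals the kernel of $M_j\to M_{j-1}$. This produces $r$ short exact sequences
\[
0 \to N_{j+1} \to M_{j+1} \to N_j \to 0 \quad (j=0,1,\ldots,r-1).
\]
Each middle term $M_{j+1}$ is cohomologically trivial, hence has trivial regulator constant by Proposition~\ref{prop:rcct}, so Proposition~\ref{prop:rces} applied to the inclusion $f_{j+1}\colon N_{j+1}\hookrightarrow M_{j+1}$ yields
\[
\rc{\Theta}{N_{j+1}} = \rc{\Theta}{N_j}^{-1}\kd{\Theta}{f_{j+1}}^{-2}.
\]

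The next step is to simplify the defect $\kd{\Theta}{f_{j+1}}$. Since $\HH{1}{H}{M_{j+1}}=0$ for every $H\leq G$, the kernel of $\HH{1}{H}{f_{j+1}}$ is the full group $\HH{1}{H}{N_{j+1}}$; hence $\kd{\Theta}{f_{j+1}} = \prod_{H\leq G}\hh{1}{H}{N_{j+1}}^{n_H}$. A straightforward induction on $k$ using the recursion above then produces
\[
\rc{\Theta}{N_k} = \rc{\Theta}{M''}^{(-1)^k}\prod_{j=1}^{k}\Biggl(\prod_{H\leq G}\hh{1}{H}{N_j}^{n_H}\Biggr)^{2(-1)^{k-j+1}}.
\]

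The final step is a dimension shift that converts each $\hh{1}{H}{N_j}$ into cohomology of $M'$ itself. The long exact sequence attached to $0\to N_{j+1}\to M_{j+1}\to N_j\to 0$, together with the vanishing of $\HH{i}{H}{M_{j+1}}$ for all $i\geq 1$, produces isomorphisms $\HH{i}{H}{N_j}\cong \HH{i+1}{H}{N_{j+1}}$ for every $i\geq 1$ and every $H\leq G$. Iterating from $N_r = M'$ backwards gives $\hh{1}{H}{N_{r-s}} = \hh{s+1}{H}{M'}$ for $0\leq s \leq r-1$. Substituting this into the formula above at $k=r$ and re-indexing via $\ell = r-j+1$ recovers exactly the claimed identity.

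The only real obstacle I anticipate is notational: keeping track of the alternating signs in the exponents and matching the indexing of the intermediate modules $N_j$ with the cohomological degrees of $M'$ after the dimension shift. Structurally the argument is a standard telescoping computation once the two propositions and the dimension-shifting isomorphisms are on the table; in particular the factor of $2$ in the exponent comes directly from the fact that Proposition~\ref{prop:rces} introduces $\kd{\Theta}{f}^2$, which in turn justifies our use of $\cont{\tors{M}^H}^2$ rather than $\cont{\tors{M}^H}$ in Definition~\ref{def:rc}.
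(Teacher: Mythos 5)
Your proposal is correct and is precisely the argument the paper intends: the paper states the corollary follows "immediately" by combining Propositions \ref{prop:rces} and \ref{prop:rcct}, and your decomposition into short exact sequences $0\to N_{j+1}\to M_{j+1}\to N_j\to 0$, the identification $\kd{\Theta}{f_{j+1}}=\prod_{H}\hh{1}{H}{N_{j+1}}^{n_H}$ from cohomological triviality, and the dimension shift $\hh{1}{H}{N_j}=\hh{r-j+1}{H}{M'}$ are exactly the steps being elided. The sign bookkeeping in your telescoping formula checks out and reproduces the stated identity after the substitution $\ell=r-j+1$.
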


\section{Tate exact sequences for units}\label{sec:tes}
In this section we briefly recall the two four-term exact sequences introduced by Ritter and Weiss in \cite{RitterWeiss1996} which are the starting point of our proof of \eqref{eq:bcnf}. 

We begin with the definition of the inertial lattices in a local setting. Let $p$ be a rational prime and let $D$ be the Galois group of a finite extension $F/E$ of $p$-adic fields, whose corresponding residual fields extension has Galois group $\overline{D}$, generated by the Frobenius $\varphi$. The units of $F$ will be denoted by $\locunits{F}$.  

\begin{definition}
The inertial lattice of $F/E$ is the $\Z[D]$-lattice
\[
W = \{(x,y)\in\Delta D\oplus \Z[\overline{D}] : \overline x = (\varphi-1)y  \}
\]
where the bar denotes the projection $D\to \overline{D}$ and $\Delta D=\Ker(\Z[D]\to \Z)$ is the augmentation ideal.
\end{definition}

The following properties of inertial lattices will be used to compute their regulator constants.

\begin{proposition}[{\cite[Lemma 5, Proposition 2]{RitterWeiss1996}, \cite[Chapter 7]{Weiss1996}}]\label{prop:il}
There is a commutative diagram of $\Z[D]$-modules with exact rows and columns :
\begin{equation}\label{eq:wdiag}
\begin{tikzcd}
&0 \arrow[d]& 0 \arrow[d] \\
&\locunits{F} \arrow[d]\ar[equal]{r}& \locunits{F} \arrow[d] \\
0\arrow[r]&F^\times \arrow[d,"v"]\arrow[r] & V\arrow[d]\arrow[r]& \Delta D \ar[equal]{d}\arrow[r]& 0\\
0\arrow[r]&\Z \arrow[r]\arrow[d] & W\arrow[r]\arrow[d]& \Delta D \arrow[r]& 0\\
&0 & 0 \\
\end{tikzcd}
\end{equation}
where $v$ is the valuation of $F$ and $V$ is cohomologically trivial. Moreover there is an exact sequence of $\Z[D]$-modules
\begin{equation}\label{eq:wdualwes}
0\to W \to \Z[D]\oplus \Z[D] \to \dual{W}\to 0.
\end{equation}
\end{proposition}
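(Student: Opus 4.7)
Since this proposition is asserted as a citation of \cite{RitterWeiss1996} and \cite{Weiss1996}, the plan is to reconstruct the essential ingredients and point out where the technical weight lies.

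First, verify the bottom row $0 \to \Z \to W \to \Delta D \to 0$ directly from the definition of $W$. The first-coordinate projection $W \to \Delta D$ is surjective because $\overline{D}$ is cyclic, whence $\Delta \overline{D} = (\varphi - 1)\Z[\overline{D}]$ and any $x \in \Delta D$ admits $y \in \Z[\overline{D}]$ with $\bar{x} = (\varphi - 1)y$. The kernel consists of pairs $(0, y)$ with $(\varphi - 1) y = 0$, which is $\Z \cdot N_{\overline{D}}$, isomorphic to $\Z$ with trivial $D$-action; the embedding $\Z \hookrightarrow W$ sends $1$ to $(0, N_{\overline{D}})$.

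Next, construct $V$ using the local fundamental class $u_{F/E} \in \HH{2}{D}{F^\times}$. Under the dimension-shift $0 \to \Delta D \to \Z[D] \to \Z \to 0$ (with $\Z[D]$ cohomologically trivial), one has $\HH{2}{D}{F^\times} \cong \Ext{1}{\Z[D]}{\Delta D}{F^\times}$, so $u_{F/E}$ corresponds to an extension $0 \to F^\times \to V \to \Delta D \to 0$ of $\Z[D]$-modules. Its cohomological triviality is immediate from Tate's theorem, because cup product with the restriction of $u_{F/E}$ induces isomorphisms $\HHT{i}{H}{\Z} \cong \HHT{i+2}{H}{F^\times}$ for every $H \leq D$, and these match the connecting maps in the long exact Tate cohomology sequence of this middle row, forcing $\HHT{i}{H}{V} = 0$ for all $i$ and all $H$. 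Pushing the extension forward along $v \colon F^\times \to \Z$ then produces an extension of $\Delta D$ by $\Z$ whose class in $\HH{2}{D}{\Z} \cong \Hom(D, \Q/\Z)$ is identified, via the local invariant map, with the unramified character $D \to \overline{D} \hookrightarrow \Q/\Z$ sending Frobenius to $1/|\overline{D}|$. A direct computation shows the bottom row realizes the same class, which produces the vertical map $V \to W$ and, by the $3 \times 3$ lemma applied to \eqref{eq:wdiag}, the middle column $0 \to \locunits{F} \to V \to W \to 0$.

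Finally, the exact sequence $0 \to W \to \Z[D] \oplus \Z[D] \to \dual{W} \to 0$ would come from an explicit presentation of $\dual{W}$. Using the defining relation $\bar{x} = (\varphi - 1)y$ and a $\Z[D]$-valued pairing on $W$ built from the trace on $\Z[\overline{D}]$ and the canonical pairing on $\Delta D$, write down a surjection $\Z[D]^{\oplus 2} \twoheadrightarrow \dual{W}$ and verify that its kernel is isomorphic to $W$ as a $\Z[D]$-module; the concrete cocycle calculation is carried out in \cite[Chapter 7]{Weiss1996}. The main obstacle in the whole proposition is the cohomological triviality of $V$, which rests entirely on Tate's theorem and thus ultimately on local class field theory.
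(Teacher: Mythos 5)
Your reconstruction is correct and follows the same route as the sources the paper cites for this proposition (the paper itself offers no proof beyond the references to Ritter--Weiss and Weiss): the bottom row from the explicit description of $\ker(\varphi-1)=\Z N_{\overline{D}}$, the middle row as the Yoneda representative of the local fundamental class with cohomological triviality of $V$ via Tate--Nakayama for all subgroups, the map $V\to W$ from the identification of $v_*(u_{F/E})$ with the unramified character, and the middle column by the snake/$3\times 3$ lemma. Your treatment of \eqref{eq:wdualwes} is, like the paper's, ultimately a deferral to the explicit computation in Weiss, which is the appropriate place for it.
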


\begin{remark}
To avoid repeated citation, we recall here the main cohomological local class field theory result we are going to use, \ie the isomorphism
\[
 \HHT{i}{D}{\Z} \cong \HHT{i+2}{D}{F^\times} 
\] 
(see \cite[Corollary 7.1.7]{NSW2013}). This is of course compatible with the middle horizontal line of \eqref{eq:wdiag}. We will also use the fact that $U_F$ is cohomologically trivial is $F/E$ is unramified (see \cite[Proposition 7.1.2]{NSW2013}).
\end{remark}

\begin{lemma}\label{lemma:rcwloc}
For any Brauer relation $\Theta=\sum_{H\leq D} n_H H$ in $D$ we have $\rc{\Theta}{\dual{W}} = 1$.
\end{lemma}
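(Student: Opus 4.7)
My plan is to realise $\dual W$ as the middle term of a short exact sequence whose outer regulator constants cancel against each other and whose cohomological defect vanishes. Unwinding the definition of $W$, the second projection $W \to \Z[\overline D]$ yields a short exact sequence
\[
0 \to \Ind_I^D \Delta I \to W \to \Z[\overline D] \to 0,
\]
where $I = \Ker(D \to \overline D)$ is the inertia subgroup. Indeed, the projection is surjective because for any $y \in \Z[\overline D]$ one can find $x \in \Delta D$ with $\overline x = (\varphi - 1) y$, and its kernel $\{x \in \Delta D : \overline x = 0\}$ equals $\Ker(\Z[D] \to \Z[D/I])$, which in turn is identified with $\Ind_I^D \Delta I$ by tensoring the augmentation sequence of $I$ with the $\Z[I]$-flat module $\Z[D]$. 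Since $I$ is normal in $D$ (so that $\dual{\Z[\overline D]} \cong \Z[\overline D]$), and since induction commutes with $\Z$-duality for finite-index subgroups, dualizing over $\Z$ produces
\[
0 \to \Z[\overline D] \to \dual W \to \Ind_I^D \dual{\Delta I} \to 0.
\]

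I then apply Proposition \ref{prop:rces} to this dualized sequence. By Proposition \ref{prop:rcpropres}, the two outer regulator constants descend to the inertia subgroup as $\rc{\Theta}{\Z[\overline D]} = \rc{\Res_I^D \Theta}{\Z}$ and $\rc{\Theta}{\Ind_I^D \dual{\Delta I}} = \rc{\Res_I^D \Theta}{\dual{\Delta I}}$. To handle the second factor, I apply Proposition \ref{prop:rces} inside $I$ to the dualized augmentation sequence $0 \to \Z \to \Z[I] \to \dual{\Delta I} \to 0$: since $\rc{\Theta'}{\Z[I]} = 1$ by Proposition \ref{prop:rcct} and since $H^1(H', \Z) = 0$ for every $H' \leq I$, one obtains $\rc{\Theta'}{\dual{\Delta I}} = \rc{\Theta'}{\Z}^{-1}$ for every Brauer relation $\Theta'$ in $I$. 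Specialising to $\Theta' = \Res_I^D \Theta$, the two outer regulator constants cancel: $\rc{\Theta}{\Z[\overline D]}\cdot \rc{\Theta}{\Ind_I^D \dual{\Delta I}} = 1$.

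For the cohomological defect $\kd{\Theta}{f}$ attached to the dualized sequence, it suffices to show $H^1(H, \Z[\overline D]) = 0$ for every $H \leq D$. Writing $\Z[\overline D] = \Ind_I^D \Z$ and combining Mackey decomposition with Shapiro's lemma, this group decomposes as $\bigoplus_{g \in H \backslash D / I} H^1(H \cap I^g, \Z)$, which vanishes because $H^1$ of a finite group with trivial $\Z$ coefficients is zero. Proposition \ref{prop:rces} then gives $\rc{\Theta}{\dual W} = 1$.

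The main conceptual step is spotting the short exact sequence $0 \to \Ind_I^D \Delta I \to W \to \Z[\overline D] \to 0$, which isolates the role of the inertia subgroup inside $\dual W$; once this is in place, the argument reduces to Proposition \ref{prop:rcpropres} and two applications of the elementary vanishing $H^1(\text{finite group}, \Z) = 0$, first to compute $\rc{\Theta'}{\dual{\Delta I}}$ inside $I$ and then to kill $\kd{\Theta}{f}$.
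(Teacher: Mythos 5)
Your proof is correct, and it takes a genuinely different route from the paper's. The pivotal observation --- the short exact sequence $0 \to \Ind_I^D \Delta I \to W \to \Z[\overline D] \to 0$ read off from the fibre--product description of $W$ --- is sound: the second projection is surjective because $\Delta D \twoheadrightarrow \Delta\overline{D}$ and $(\varphi-1)\Z[\overline D]\subseteq\Delta\overline{D}$, and its kernel $\{x\in\Delta D:\overline{x}=0\}$ is all of $\Ker(\Z[D]\to\Z[D/I])\cong\Ind_I^D\Delta I$ since the augmentation factors through $\Z[D/I]$. Dualizing (exact here, all modules being $\Z$-free) so that the permutation module $\Z[\overline D]$ sits in the sub position is what makes both Kani defects vanish via $H^1(\cdot,\Z)=0$; note this step is genuinely needed, since for the undualized sequence the defect would involve $\HH{1}{H\cap I^g}{\Delta I}\cong\Z/\card{H\cap I^g}$, which is generally nonzero. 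The paper instead feeds the sequences \eqref{eq:wdualwes} and the bottom row of \eqref{eq:wdiag} into Proposition \ref{prop:rces}, converts the resulting defect into local arithmetic data ($\hh{2}{H}{F^\times}=\card{H}$ by local class field theory and $\hh{1}{H}{\locunits{F}}$ equal to the ramification index of $F/F^H$), and concludes with the identity $\prod_H\bigl([D:I]/[H:H\cap I]\bigr)^{n_H}=1$ from \cite{DokchitserDokchitser2009}. Your argument buys a purely module-theoretic proof that bypasses local class field theory and the cited Dokchitser--Dokchitser computation entirely, and it makes transparent that only the position of the inertia subgroup inside $D$ matters; the paper's heavier version has the merit of expressing the answer through the same local cohomological invariants that reappear elsewhere in the argument (compare Lemma \ref{lemma:h1wh3u}).
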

\begin{proof}
From the exact sequence \eqref{eq:wdualwes} and Proposition \ref{prop:rces} we deduce
\[
\rc{\Theta}{\dual{W}} = \rc{\Theta}{W}^{-1}\prod_{H\leq D}\hh{1}{H}{W}^{-2n_H}.
\]
Applying again Proposition \ref{prop:rces} to the bottom row of diagram \eqref{eq:wdiag} we can rewrite this as
\begin{align*}
\rc{\Theta}{\dual{W}} &= \rc{\Theta}{\Z}^{-1}\rc{\Theta}{\Delta D}^{-1}\prod_{H\leq D}\hh{1}{H}{W}^{-2n_H}\\
&= \prod_{H\leq D}\left(\frac{\hh{1}{H}{\Delta D}}{\hh{1}{H}{W}}\right)^{2n_H}
\end{align*}
where the last equality follows by applying once more Proposition \ref{prop:rces} to the augmentation exact sequence $0\to \Delta D \to \Z[D] \to \Z\to0$. Now since $V$ in Diagram \eqref{eq:wdiag} is cohomologically trivial, by dimension shifting we get 
\begin{align*}
\frac{\hh{1}{H}{\Delta D}}{\hh{1}{H}{W}} &= \frac{\hh{2}{H}{F^\times}}{\hh{2}{H}{\locunits{F}}} \\
&= \frac{\hh{2}{H}{F^\times}}{\hh{1}{H}{\locunits{F}}}
\end{align*}
where the last equality follows from \cite[Corollary 4.3]{GruenbergWeiss1996}.
Now we have $\hh{2}{H}{F^\times} = \hht{0}{H}{\Z}=\card{H}$ by local class field theory. Moreover $\hh{1}{H}{\locunits{F}}$ is equal to the ramification index of $F/F^{H}$, as it follows easily by taking $H$-invariants of the valuation sequence $0\to \locunits{F}\to F^\times \to \Z \to 0$ and using $\HH{1}{H}{F}=0$ by Hilbert's Theorem 90. We have therefore obtained 
\[
\rc{\Theta}{\dual{W}} = \prod_{H\leq D} [H : H \cap I]^{2n_H}
\]
where $I\leq D$ is the inertia subgroup of $F/E$. Observe now that
\[
\prod_{H\leq D} [D : I]^{n_H} = 1
\]
(see \cite[Example 2.30]{DokchitserDokchitser2009}), which allows us to write
\[
\rc{\Theta}{\dual{W}} = \prod_{H\leq D} \left(\frac{[D : I]}{[H : H \cap I]}\right)^{-2n_H}.
\]
The right-hand side of the above equality is trivial by \cite[Theorem 2.36 (f)]{DokchitserDokchitser2009}, showing the claim of the lemma. 
\end{proof}


We now leave the local setting and consider a finite Galois extension $K/k$ of number field with Galois group $G$. We will be working with a finite $G$-invariant set $S$ of primes of $K$, containing the archimedean primes. 

\begin{notation}
For a subextension $F/k$ of $K/k$, let $S_F$ be the set of primes of $F$ which lie below primes of $S$. We will denote by $\units{F}{S}$ the group of $S_F$-units of $F$ and set $\roots{F}=\tors{(\units{F}{S})}$ for the group of roots of unity. The $S_F$-class group of $F$ will be denoted by $\Cl{F}{S}$.
\end{notation}

\begin{notation}
For a prime $\mathfrak{P}$ of $K$, we will denote by $K_\mathfrak{P}$ the completion of $K$ at $\mathfrak{P}$. The group of units $\locunits{K_\mathfrak{P}}$ of $K_\mathfrak{P}$ will be denoted simply by $\locunits{\mathfrak{P}}$ to avoid double subscripts, if no confusion may arise. Similarly, the inertial sublattice of $K_\mathfrak{P}$ will be denoted by $W_\mathfrak{P}$. For a subgroup $H$ of $G$, $H_\mathfrak{P}$ denotes the decomposition subgroup of $\mathfrak{P}$ in $H$, or equivalently, the Galois group of $K_\mathfrak{P}$ over $(K^H)_\mathfrak{p}$ where $\mathfrak{p} = \mathfrak{P} \cap K^H$. 
\end{notation}

Let $*$ denote a choice of representatives for each orbit of the action of $G$ on the primes of $K$ and let $S_*$ denote the intersection of $S$ and $*$. Let $S_*^{ram}$ denote the set of ramified primes in $*$ which are \emph{not} in $S$.  

\begin{theorem}[{\cite{RitterWeiss1996}, \cite[Theorem 9]{Weiss1996}}]\label{thm:rwes}
There is an exact sequence of finitely generated $\Z[G]$-modules
\begin{equation}\label{eq:mainrw}
0 \to \units{K}{S} \to A \to B \to \nabla \to 0 
\end{equation}
where $A$ and $B$ are cohomologically trivial and $\nabla$ fits into a short exact sequence of $\Z[G]$-modules
\begin{equation}\label{eq:pies}
0\to \Cl{K}{S}\stackrel{\iota}{\longrightarrow} \nabla \stackrel{\pi}{\longrightarrow} \overline{\nabla}\to 0
\end{equation}
where $\Cl{K}{S}$ is sent isomorphically onto $\tors{\nabla}$. On its turn, $\overline{\nabla}$ fits into an exact sequence of $\Z[G]$-modules
\begin{equation}\label{eq:epsilones}
0\to \overline{\nabla}\stackrel{\eta}{\longrightarrow} \Z[S] \oplus \WW{S}\stackrel{\varepsilon}{\longrightarrow} \Z \to 0
\end{equation}
where 
\[
\WW{S} = \bigoplus_{\mathfrak{P}\in S_*^{ram}} \Ind^G_{G_{\mathfrak{P}}} \dual{(W_{\mathfrak{P}})}.
\]
\end{theorem}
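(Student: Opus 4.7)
The plan is to globalize the local construction of Proposition \ref{prop:il} and splice it with the classical exact sequence
\[
0 \to \units{K}{S} \to K^\times \to \bigoplus_{\mathfrak{P}\notin S_*} \Ind_{G_\mathfrak{P}}^G \Z \to \Cl{K}{S} \to 0
\]
relating $S$-units, the multiplicative group, ideals outside $S$, and the $S$-class group. First, for each orbit representative $\mathfrak{P}\in S_*\cup S_*^{ram}$, the top rows of Diagram \eqref{eq:wdiag} yield a $\Z[G_\mathfrak{P}]$-exact sequence $0 \to \locunits{\mathfrak{P}} \to V_\mathfrak{P} \to \Delta G_\mathfrak{P} \to 0$ with $V_\mathfrak{P}$ cohomologically trivial. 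Inducing up to $G$ and direct-summing produces a semilocal cohomologically trivial module that one uses to replace $K^\times$ in the middle of the classical sequence via a pushout along the composite $K^\times \to \bigoplus_\mathfrak{P} \Ind_{G_\mathfrak{P}}^G K_\mathfrak{P}^\times \to \bigoplus_\mathfrak{P} \Ind_{G_\mathfrak{P}}^G V_\mathfrak{P}$.

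A snake lemma diagram chase on the resulting pushout should then yield the desired four-term sequence $0\to \units{K}{S}\to A\to B\to \nabla\to 0$: the kernel is preserved because $S$-units are exactly the global elements with trivial valuation outside $S$ and unit components at $S$, while cohomological triviality of $A$ and $B$ follows from that of the $V_\mathfrak{P}$'s together with stability under induction from subgroups. For the structure of $\nabla$, the ideal-to-class map produces an injection $\iota:\Cl{K}{S}\hookrightarrow\nabla$ whose image will be exactly $\tors{\nabla}$, giving \eqref{eq:pies}. To identify $\overline{\nabla}$, each $\mathfrak{P}\in S_*$ contributes $\Ind_{G_\mathfrak{P}}^G\Z$ via the valuation map in the middle row of Diagram \eqref{eq:wdiag} (giving the $\Z[S]$ summand), while each $\mathfrak{P}\in S_*^{ram}$ contributes $\Ind_{G_\mathfrak{P}}^G\dual{W_\mathfrak{P}}$ (giving $\WW{S}$). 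The dual $\dual{W_\mathfrak{P}}$ rather than $W_\mathfrak{P}$ itself appears because, by \eqref{eq:wdualwes}, $\dual{W_\mathfrak{P}}$ admits a two-step resolution by $\Z[G_\mathfrak{P}]$-free modules, so placing it in the quotient instead of in $B$ preserves cohomological triviality of $B$. The augmentation $\varepsilon$ is then the degree map on $\Z[S]$ combined with an appropriate augmentation on each $\dual{W_\mathfrak{P}}$.

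The main obstacle is the simultaneous bookkeeping required to keep both $A$ and $B$ cohomologically trivial, identify $\tors{\nabla}$ with $\Cl{K}{S}$ precisely, and produce the prescribed quotient $\Z[S]\oplus\WW{S}\twoheadrightarrow\Z$ with the correct augmentation. The ramified primes outside $S$ are the delicate case: they contribute to the local cohomology of the middle terms without contributing to $\units{K}{S}$, and it is exactly this asymmetry that forces the dualization from $W_\mathfrak{P}$ to $\dual{W_\mathfrak{P}}$ in the definition of $\WW{S}$, making the construction substantially more subtle than Tate's original setting where $S$ was assumed large enough to contain all ramified primes.
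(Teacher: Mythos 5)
The paper itself does not reprove this theorem: it is quoted from Ritter--Weiss (and the proof of Lemma \ref{lemma:keylemma} later recalls the actual mechanism, namely the surjection $\theta\colon V_{S'}\to\mathfrak{V}$ of \cite[Theorem 1]{RitterWeiss1996} onto the extension $0\to\idCl{K}\to\mathfrak{V}\to\Delta G\to 0$ representing the \emph{global fundamental class}). Measured against that construction, your sketch has a genuine gap: global class field theory never enters. The cohomological triviality of $A$ cannot follow ``from that of the $V_\mathfrak{P}$'s together with stability under induction.'' In the real construction $A=\Ker\theta$ is cohomologically trivial because $\theta$ induces isomorphisms on all Tate cohomology groups, and that is exactly the Nakayama--Tate theorem, i.e.\ the compatibility of the local fundamental classes (built into the $V_\mathfrak{P}$) with the global one in $\HH{2}{G}{\idCl{K}}$. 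Your pushout of $K^\times$ into $\bigoplus_{\mathfrak{P}}\Ind_{G_\mathfrak{P}}^G V_\mathfrak{P}$ produces a cokernel whose cohomology contains the kernel and cokernel of the localization map $\HH{2}{H}{K^\times}\to\bigoplus_\mathfrak{P}\HH{2}{H_\mathfrak{P}}{K_\mathfrak{P}^\times}$ on Brauer groups at finitely many places; controlling these requires the Hasse reciprocity law and they do not vanish for free. A test of the claim: any sequence \eqref{eq:mainrw} with $A,B$ cohomologically trivial forces $\HHT{i}{H}{\units{K}{S}}\cong\HHT{i-2}{H}{\nabla}$ for all $i$ and $H$, and these isomorphisms are (in Tate's original setting) equivalent to the main theorems of class field theory, so no purely local/ideal-theoretic splicing can produce them.

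Two further problems. First, your starting sequence has middle term $\bigoplus_{\mathfrak{P}\notin S}\Z$, a permutation module on infinitely many orbits, hence not finitely generated; modifying it at the finitely many primes of $S_*\cup S_*^{ram}$ cannot repair this, which is precisely why Ritter--Weiss work with id\`eles and $\idCl{K}$ rather than with divisors. Second, the heuristic for why $\dual{W_\mathfrak{P}}$ rather than $W_\mathfrak{P}$ appears is not probative: $W_\mathfrak{P}$ also sits in short exact sequences with free modules (bottom row of \eqref{eq:wdiag} spliced with the augmentation sequence), so ``admitting a two-step free resolution'' does not single out the dual. The correct parts of your sketch are the soft ones: $\overline{\nabla}$ is torsion-free because it embeds in the lattice $\Z[S]\oplus\WW{S}$, and $\Cl{K}{S}$ is finite, so once both exact sequences \eqref{eq:pies} and \eqref{eq:epsilones} exist the identification $\iota(\Cl{K}{S})=\tors{\nabla}$ is automatic.
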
 

\section{Main result}
We now come to the proof of Brauer's class number formula. We will first focus on establishing a relation between the regulator constants of the units and a ratio of class numbers, starting from the exact sequences of the previous section and using the formalism of Section \ref{sec:rc} and (cohomological) class field theory. We will then recall the  known relation between regulator constants of the units and regulators (see Example \ref{ex:rcexa}), which we use to conclude the proof of \eqref{eq:bcnf}. 

We will be working in the context of the introduction and the final part of the previous section and retain the notation introduced so far: $K/k$ is a finite Galois extension of number fields with Galois group $G$ and $S$ is a finite $G$-invariant set of primes of $K$ containing the archimedean primes. 

\begin{notation}
Let $F/k$ be a subextension of $K/k$. Then $\idCl{F}$ denotes the id\`eles class group of $F$. Let 
\[
\idunits{F}{S}= \prod_{\mathfrak{p}\in S_F} (F_\mathfrak{p})^\times \times \prod_{\mathfrak{p}\notin S_F} U_\mathfrak{p}
\] 
denote the group of $S_F$-idelic units of $F$ and let $\Qq{F}{S}$ be the quotient of the diagonal embedding $\units{F}{S}\to \idunits{F}{S}$. 
\end{notation}

The reason for introducing $\Qq{K}{S}$ is to split the four terms exact sequence
\begin{equation}\label{eq:euccl}
0\to \units{K}{S}\to \idunits{K}{S}\to\idCl{K}\to \Cl{K}{S}\to 0
\end{equation}
in two short exact sequences
\begin{equation}\label{eq:euq}
0\to \units{K}{S}\to \idunits{K}{S}\to \Qq{K}{S}\to 0,
\end{equation}
\begin{equation}\label{eq:qccl}
0\to \Qq{K}{S}\to \idCl{K}\to \Cl{K}{S}\to 0.
\end{equation}

We start by computing the $\psi$ factors of Proposition \ref{prop:rces} for the homomorphisms $\iota$ and $\eta$ appearing in Theorem \ref{thm:rwes}. The main technical ingredient is the next lemma.


\begin{lemma}\label{lemma:keylemma}
For any $i\in\Z$ and any $H\leq G$, there is an anticommutative diagram
\[
\begin{tikzcd}
\HH{i}{H}{\Cl{K}{S}} \arrow[d]\arrow[r]& \HH{i}{H}{\nabla} \arrow[d] \\
\HH{i+1}{H}{\Qq{K}{S}} \arrow[r] & \HH{i+2}{H}{\units{K}{S}}
\end{tikzcd}
\]
where the left-hand vertical map comes from \eqref{eq:qccl}, the bottom horizontal map comes from \eqref{eq:euq}, the top horizontal map comes from \eqref{eq:pies} and the right-hand vertical map is the isomorphism induced by a Tate exact sequence with the same properties as \eqref{eq:mainrw}.
\end{lemma}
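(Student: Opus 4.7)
The plan is to realize both paths of the square as coming from the same $\Ext{2}{}{\cdot}{\cdot}$-class and then invoke naturality of Yoneda composition. Splicing \eqref{eq:euq} and \eqref{eq:qccl} through $\Qq{K}{S}$ yields the 4-term exact sequence \eqref{eq:euccl}, which represents a Yoneda class $\xi \in \Ext{2}{G}{\Cl{K}{S}}{\units{K}{S}}$; by the standard compatibility between Yoneda products and connecting homomorphisms, the left-then-bottom composition $\HH{i}{H}{\Cl{K}{S}} \to \HH{i+1}{H}{\Qq{K}{S}} \to \HH{i+2}{H}{\units{K}{S}}$ is multiplication by $\xi$ (after restriction to $H$). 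Similarly, \eqref{eq:mainrw} represents a class $\omega \in \Ext{2}{G}{\nabla}{\units{K}{S}}$, and since $A$ and $B$ are cohomologically trivial the resulting dimension-shift $\HH{i}{H}{\nabla} \xrightarrow{\sim} \HH{i+2}{H}{\units{K}{S}}$ is precisely multiplication by $\omega$; hence the top-then-right composition of the square is multiplication by the pullback $\iota^{*}\omega \in \Ext{2}{G}{\Cl{K}{S}}{\units{K}{S}}$.

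It therefore suffices to prove that $\iota^{*}\omega = \pm\,\xi$ in $\Ext{2}{G}{\Cl{K}{S}}{\units{K}{S}}$. The cleanest way is to construct an explicit morphism of 4-term exact sequences
\[
\begin{tikzcd}
0 \arrow[r] & \units{K}{S} \arrow[r]\arrow[d,equal] & \idunits{K}{S} \arrow[r]\arrow[d] & \idCl{K} \arrow[r]\arrow[d] & \Cl{K}{S} \arrow[r]\arrow[d,"\iota"] & 0 \\
0 \arrow[r] & \units{K}{S} \arrow[r] & A \arrow[r] & B \arrow[r] & \nabla \arrow[r] & 0
\end{tikzcd}
\]
extending the identity on $\units{K}{S}$ and the map $\iota$ of \eqref{eq:pies} on the right. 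Such a morphism is naturally available because, in the Ritter--Weiss construction, $A$ and $B$ are assembled from \emph{semilocal} (idelic) pieces built out of the local Tate sequences at each prime; there are therefore canonical comparison maps $\idunits{K}{S}\to A$ and $\idCl{K}\to B$ glued from the local data, and the fact that the rightmost square recovers $\iota$ is exactly how $\nabla$ is engineered to contain $\Cl{K}{S}=\tors{\nabla}$. Once this morphism is in place, the induced map on the middle splittings gives $\Qq{K}{S}\to \Ker(B\to\nabla)$ fitting into two compatible morphisms of short exact sequences, and naturality of $\delta$ in each of them yields the desired equality of Yoneda classes, with the sign being the usual Koszul sign of Yoneda splicing.

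The principal obstacle is the construction of the morphism above, which requires unpacking the internal definitions of $A$ and $B$ in \cite{RitterWeiss1996} and checking commutativity at each prime (in particular verifying the compatibility at the ramified primes in $S_*^{ram}$, where $\WW{S}$ enters the game). The cohomological triviality of $A$ and $B$ is then used only at the very end, to guarantee that the right vertical arrow of the lemma is an isomorphism; the anticommutativity itself is a purely formal consequence of the morphism of 4-term sequences and the sign conventions implicit in identifying $\HH{i+2}{H}{\units{K}{S}}$ with the target of both connecting maps.
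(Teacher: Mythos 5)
Your reduction to an identity of Yoneda classes is a correct reformulation of the problem --- both composites are indeed given by multiplication by restricted $2$-extension classes, and the content of the lemma is exactly $\iota^{*}\omega=\pm\xi$ --- but the step that is supposed to establish this identity has a genuine gap. The morphism of four-term exact sequences you draw does not exist in the form described: there are no ``canonical comparison maps'' $\idunits{K}{S}\to A$ and $\idCl{K}\to B$. In the Ritter--Weiss construction, $A=A_\theta$ is the kernel of a surjection $\theta\colon V_{S'}\to\mathfrak{V}$, and although $\idunits{K}{S}$ maps canonically into $V_{S'}$, its composite with $\theta$ is the nonzero natural map $\idunits{K}{S}\to\idCl{K}\subseteq\mathfrak{V}$; so $\idunits{K}{S}$ does not land in $A$. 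Nor is a lift guaranteed abstractly: $A$ is cohomologically trivial but not projective, since it contains $\roots{K}$ and hence has $\Z$-torsion. The crucial input of your argument is therefore asserted rather than proved, and the assertion is false for the maps you name.

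What actually holds is an interleaving, not a morphism, of the two $2$-extensions: letting $\tilde R_{S'}\subseteq V_{S'}$ be the preimage of $R_{S'}=\Ker(W_{S'}\to\Delta G)$, one obtains a $3\times 3$ commutative diagram of short exact sequences whose rows are \eqref{eq:euq}, $0\to A_\theta\to\tilde R_{S'}\to\idCl{K}\to 0$ and $0\to A'_\theta\to R_{S'}\to\Cl{K}{S}\to 0$, and whose outer columns are the first half of the Tate sequence and \eqref{eq:qccl}. The anticommutativity is then the classical statement that the two iterated connecting homomorphisms of such a diagram anticommute (\cite[Chapter III, Proposition 4.1]{CartanEilenberg1956}); this diagram is also precisely the witness that $\xi=-\iota^{*}\omega$, so your Yoneda-theoretic framing can be completed along these lines. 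Two further points: the identification $\HH{i+1}{H}{A'_\theta}\cong\HH{i}{H}{\nabla}$ requires the auxiliary projective module $B_{S'}$, which is why the statement speaks of ``a Tate exact sequence with the same properties as \eqref{eq:mainrw}'' rather than of \eqref{eq:mainrw} itself; and you are right that the sign is immaterial for the later applications (only orders of kernels and cokernels are used), so proving $\iota^{*}\omega=\pm\xi$ would suffice --- but that proof still has to be supplied, and it is exactly the construction of the interleaving diagram.
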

\begin{proof}
We are going to use the notation of \cite{RitterWeiss1996} with the exception of the $S$-unit id\`eles which are denoted $J_{S}$ in \cite{RitterWeiss1996} and $\idunits{K}{S}$ here. Also, we will keep the reference to the set $S$ here, while in \cite{RitterWeiss1996} it is dropped. We will avoid recalling definitions and notation which are not explicitly needed for the proof, referring the reader to \cite{RitterWeiss1996} for more details.

By \cite[Theorem 1]{RitterWeiss1996}, there is a commutative diagram of $\Z[G]$-modules 
\[
\begin{tikzcd}
0 \arrow[r]& \idunits{K}{S} \arrow[d] \arrow[r]& V_{S'} \arrow[d,"\theta"] \arrow[r]& W_{S'} \arrow[d] \arrow[r]& 0\\ 
0 \arrow[r]& \idCl{K} \arrow[r]& \mathfrak{V}\arrow[r]& \Delta G \arrow[r]& 0
\end{tikzcd}
\] 
where $\theta$ is surjective and the leftmost vertical map is the one in \eqref{eq:euccl}. Applying the snake lemma to the above diagram and using \eqref{eq:euccl}, one obtains a ``Tate" exact sequence
\begin{equation}\label{eq:falseTatees}
0\to \units{K}{S} \to A_{\theta} \to R_{S'} \to \Cl{K}{S}\to 0 
\end{equation}
where $A_{\theta}$ is the kernel of the central vertical map and $R_{S'}$ the one of the rightmost vertical map. 
Let $\tilde{R}_{S'}$ denote the inverse image of $R_{S'}$ under the map $V_{S'} \to W_{S'}$, so that we have a commutative diagram with exact rows and injective vertical maps as follows
\begin{equation}\label{eq:diagrw1}
\begin{tikzcd}
0 \arrow[r]& \units{K}{S} \arrow[d] \arrow[r]& A_{\theta} \arrow[d] \arrow[r]& A'_{\theta} \arrow[d] \arrow[r]& 0\\ 
0 \arrow[r]& \idunits{K}{S} \arrow[r]& \tilde{R}_{S'} \arrow[r]& R_{S'} \arrow[r]& 0
\end{tikzcd}
\end{equation}
where $A'_{\theta} = \Ker(R_{S'} \to \Cl{K}{S})$. Observe also that, since $\theta$ is surjective, the image of $\tilde{R}_{S'}$ under the map $V_{S'}\to \mathfrak{V}$ equals the image of $\idCl{K}$ under the map $\idCl{K}\to \mathfrak{V}$. In particular we can complete the above diagram to obtain a $3\times 3$ commutative diagram 
\[
\begin{tikzcd}
&0 \arrow[d]& 0\arrow[d]& 0\arrow[d] \\
0 \arrow[r]&\units{K}{S} \arrow[d]\arrow[r]&\idunits{K}{S} \arrow[d]\arrow[r]&\Qq{K}{S} \arrow[d]\arrow[r]& 0 \\
0 \arrow[r]&A_{\theta} \arrow[d]\arrow[r]&\tilde{R}_{S'} \arrow[d]\arrow[r]&\idCl{K} \arrow[d]\arrow[r]& 0 \\
0 \arrow[r]&A'_{\theta} \arrow[d]\arrow[r]&R_{S'} \arrow[d]\arrow[r]&\Cl{K}{S} \arrow[d]\arrow[r]& 0 \\
&0 & 0& 0
\end{tikzcd}
\]
where the top horizontal sequence is \eqref{eq:euq}, the rightmost vertical sequence is \eqref{eq:qccl} and the leftmost vertical sequence is the top sequence of diagram \eqref{eq:diagrw1}. 
Applying \cite[Chapter III, Proposition 4.1]{CartanEilenberg1956}, we get an anticommutative square
\begin{equation}\label{eq:anticomm}
\begin{tikzcd}
\HH{i}{H}{\Cl{K}{S}} \arrow[d]\arrow[r]& \HH{i+1}{H}{A'_\theta} \arrow[d] \\
\HH{i+1}{H}{\Qq{K}{S}} \arrow[r] & \HH{i+2}{H}{\units{K}{S}}
\end{tikzcd}
\end{equation}
The exact sequence \eqref{eq:falseTatees} can be transformed into \eqref{eq:mainrw} using a commutative diagram of $\Z[G]$-modules with exact rows (see \cite[p. 163]{RitterWeiss1996})
\[
\begin{tikzcd}
0 \arrow[r]& R_{S'} \arrow[d] \arrow[r]& B_{S'} \arrow[d] \arrow[r]& \overline{\nabla}_{*} \ar[equal]{d} \arrow[r]& 0\\ 
0 \arrow[r]& \Cl{K}{S} \arrow[r]& \nabla_{\theta} \arrow[r]& \overline{\nabla}_{*} \arrow[r]& 0
\end{tikzcd}
\]
with $B_{S'}$ projective. In particular $A'_{\theta}$ is isomorphic to $B'_{S'} = \Ker(B_{S'}\to \nabla_{\theta})$ and we can extend the top exact sequence of diagram \eqref{eq:diagrw1} to a four-terms exact sequence 
\[
0 \to \units{K}{S} \to A_\theta \to B'_{S'} \to \nabla_\theta \to 0
\]
which has the same properties as the one in Theorem \ref{thm:rwes}. Since $B_{S'}$ is projective we deduce 
\[ 
\HH{i+1}{H}{A'_\theta}\cong \HH{i+1}{H}{B'_{S'}}\cong \HH{i}{H}{\nabla_{\theta}}. 
\]
Plugging this in Diagram \eqref{eq:anticomm} we complete the proof.
\end{proof}


\begin{notation}
In what follows we let 
\[
\delta^i_{\Cl{}{}}:\HH{i}{H}{\Cl{K}{S}}\to \HH{i+1}{H}{\Qq{K}{S}}
\] 
denote the connecting homomorphism induced by \eqref{eq:qccl} and 
\[
\delta^i_{\Qq{}{}}: \HH{i}{H}{\Qq{K}{S}}\to \HH{i+1}{H}{\units{K}{S}}
\]
denote the connecting homomorphism induced by \eqref{eq:euq}. Moreover we set $\delta^i = \delta^{i+1}_{\Qq{}{}}\circ \delta^{i}_{\Cl{}{}}$ so that we have a commutative triangle
\[
\begin{tikzcd}
\HH{i}{H}{\Cl{K}{S}} \arrow[d,"\delta^{i}_{\Cl{}{}}"]\arrow[rd,"\delta^i"]& \\
\HH{i+1}{H}{\Qq{K}{S}} \arrow[r,"\delta^{i+1}_{\Qq{}{}}"] & \HH{i+2}{H}{\units{K}{S}}
\end{tikzcd}
\]
\end{notation}

\begin{remark}
As for the local setting, we recall once and for all the main results of cohomological global class field theory we are going to use, \ie isomorphisms
\[
\HHT{i+2}{G}{\idCl{K}} \cong \HHT{i}{G}{\Z}
\]
(see \cite[Theorem 8.1.22]{NSW2013}).
\end{remark}

The next lemma gives a formula for 
\begin{equation}\label{eq:psiiota}
\kd{\Theta}{\iota} = \prod_{H\leq G} \card{\Ker(\HH{1}{H}{\Cl{K}{S}}\to \HH{1}{H}{\nabla})}^{n_H}
\end{equation}
where $\Theta= \sum_{H\leq G} n_H H$ is a Brauer relation in $G$.

\begin{lemma}\label{lemma:psiiota}
We have
\[
\card{\Ker(\HH{1}{H}{\Cl{K}{S}}\to \HH{1}{H}{\nabla})} = \frac{\hh{2}{H}{\Qq{K}{S}}}{\hh{3}{H}{\units{K}{S}}}\frac{\card{\Coker \delta^1}}{\card{\Coker\delta^{1}_{\Cl{}{}}}}.
\]
\end{lemma}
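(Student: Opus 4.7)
The strategy is to combine Lemma \ref{lemma:keylemma} (for $i=1$) with the long exact cohomology sequences attached to \eqref{eq:qccl} and \eqref{eq:euq}, using a single input from global class field theory.

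First, applying Lemma \ref{lemma:keylemma} with $i=1$ provides an anticommutative square whose right-hand vertical arrow is an isomorphism $\HH{1}{H}{\nabla}\xrightarrow{\sim}\HH{3}{H}{\units{K}{S}}$ (this is built into the statement of the lemma, and stems from two successive dimension shifts along a Tate-type sequence with cohomologically trivial middle terms). Anticommutativity is harmless when comparing kernels, so the square immediately gives
\[
\Ker\bigl(\HH{1}{H}{\Cl{K}{S}}\to \HH{1}{H}{\nabla}\bigr)=\Ker\delta^1.
\]

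Next, from the long exact sequence attached to \eqref{eq:qccl} I consider the three terms
\[
\HH{1}{H}{\idCl{K}}\longrightarrow \HH{1}{H}{\Cl{K}{S}}\xrightarrow{\delta^1_{\Cl{}{}}}\HH{2}{H}{\Qq{K}{S}}.
\]
By the cohomological form of global class field theory recalled immediately before the lemma, the leftmost group is isomorphic to $\HHT{-1}{H}{\Z}$, which vanishes (the norm map on the trivially acted on $\Z$ is injective). Hence $\delta^1_{\Cl{}{}}$ is injective, and therefore
\[
\hh{1}{H}{\Cl{K}{S}}=\card{\mathrm{Im}\,\delta^1_{\Cl{}{}}}=\frac{\hh{2}{H}{\Qq{K}{S}}}{\card{\Coker\delta^1_{\Cl{}{}}}}.
\]

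To conclude, I apply the elementary order identities
\[
\card{\Ker\delta^1}\cdot\card{\mathrm{Im}\,\delta^1}=\hh{1}{H}{\Cl{K}{S}},\qquad \card{\mathrm{Im}\,\delta^1}\cdot\card{\Coker\delta^1}=\hh{3}{H}{\units{K}{S}},
\]
to the composite $\delta^1:\HH{1}{H}{\Cl{K}{S}}\to\HH{3}{H}{\units{K}{S}}$: eliminating $\card{\mathrm{Im}\,\delta^1}$ and substituting the expression for $\hh{1}{H}{\Cl{K}{S}}$ obtained above yields the claimed formula in one line. The only non-routine ingredient is that the right vertical arrow of the anticommutative square really is an isomorphism, but since Lemma \ref{lemma:keylemma} is being assumed, this is not a genuine obstacle; everything else amounts to two order-counting identities and the vanishing $\HHT{-1}{H}{\Z}=0$.
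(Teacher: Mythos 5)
Your proof is correct and follows essentially the same route as the paper: both identify $\Ker(\HH{1}{H}{\Cl{K}{S}}\to \HH{1}{H}{\nabla})$ with $\Ker\delta^1$ via the anticommutative square of Lemma \ref{lemma:keylemma} (anticommutativity being irrelevant for kernels), use $\HH{1}{H}{\idCl{K}}=0$ to make $\delta^1_{\Cl{}{}}$ injective, and finish by counting orders along the composite $\delta^1=\delta^2_{\Qq{}{}}\circ\delta^1_{\Cl{}{}}$. The only cosmetic difference is that the paper packages the order count as the kernel--cokernel exact sequence of a composite, whereas you eliminate $\card{\mathrm{Im}\,\delta^1}$ by hand; the computations are identical.
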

\begin{proof}
In the diagram 
\[
\begin{tikzcd}
\HH{1}{H}{\Cl{K}{S}} \arrow[d,"\delta^{1}_{\Cl{}{}}"]\arrow[rd,"\delta^1"]\arrow[r]& \HH{1}{H}{\nabla} \arrow[d] \\
\HH{2}{H}{\Qq{K}{S}} \arrow[r,"\delta^{2}_{\Qq{}{}}"] & \HH{3}{H}{\units{K}{S}}
\end{tikzcd}
\]
the bottom left triangle is commutative and the top right one is anticommutative by Lemma \ref{lemma:keylemma}. Since $\HH{1}{H}{\idCl{K}}=0$, we have $\Ker \delta^{1}_{\Cl{}{}} =0$ and an exact sequence
\[
0 \to \Ker \delta^1 \to \Ker \delta^{2}_{\Qq{}{}} \to \Coker \delta^{1}_{\Cl{}{}} \to \Coker\delta^1 \to \Coker \delta^{2}_{\Qq{}{}} \to 0.
\]
Since the right vertical map of the above diagram is an isomorphism, we deduce that
\[
\card{\Ker(\HH{1}{H}{\Cl{K}{S}}\to \HH{1}{H}{\nabla})} = \card{\Ker \delta^1} = \frac{\card{\Ker \delta^{2}_{\Qq{}{}}}}{\card{\Coker \delta^{2}_{\Qq{}{}}}}\frac{\card{\Coker \delta^1\vphantom{\Ker \delta^{2}_{\Qq{}{}}}}}{\card{\Coker \delta^{1}_{\Cl{}{}}}}=\frac{\hh{2}{H}{\Qq{K}{S}}}{\hh{3}{H}{\units{K}{S}}}\frac{\card{\Coker \delta^1}}{\card{\Coker \delta^{1}_{\Cl{}{}}}}
\]
which is the statement of the lemma.
\end{proof}

Observe that since $\HH{1}{H}{\Z}=\HH{1}{H}{\Z[S]}=0$, for every Brauer relation $\Theta = \sum_{H \leq G} n_H H$ we have
\begin{equation}\label{eq:psieta}
\kd{\Theta}{\eta} = \prod_{H\leq G} \left(\frac{\hh{1}{H}{\overline{\nabla}}}{\hh{1}{H}{\WW{S}}}\right)^{n_H}
\end{equation}
for the map $\eta$ appearing in \eqref{eq:epsilones}.

\begin{lemma}\label{lemma:psieta}
We have
\[
\hh{1}{H}{\overline{\nabla}} = \hh{2}{H}{\idCl{K}}\frac{\card{\Coker \delta^1}}{\card{\Coker\delta^{1}_{\Cl{}{}}}}\card{\Ker \delta^3_{\Qq{}{}}}.
\]
\end{lemma}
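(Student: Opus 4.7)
The plan is to extract $\hh{1}{H}{\overline{\nabla}}$ from the long exact cohomology sequence attached to \eqref{eq:pies}, translate the two relevant maps into $\delta$-notation via Lemma \ref{lemma:keylemma}, and then finish with the long exact cohomology sequence attached to \eqref{eq:qccl} together with the class field theory vanishings $\HH{1}{H}{\idCl{K}} = \HH{3}{H}{\idCl{K}} = 0$.

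First I would apply $\HH{\bullet}{H}{-}$ to \eqref{eq:pies} to obtain the exact segment
\[
\HH{1}{H}{\Cl{K}{S}} \xrightarrow{\alpha} \HH{1}{H}{\nabla} \to \HH{1}{H}{\overline{\nabla}} \to \HH{2}{H}{\Cl{K}{S}} \xrightarrow{\beta} \HH{2}{H}{\nabla},
\]
which yields the key identity $\hh{1}{H}{\overline{\nabla}} = \card{\Coker \alpha}\cdot\card{\Ker \beta}$. Splitting \eqref{eq:mainrw} at its middle term and using that $A$ and $B$ are cohomologically trivial gives, for every $i \geq 1$, a dimension-shifting isomorphism $\HH{i}{H}{\nabla} \cong \HH{i+2}{H}{\units{K}{S}}$; this is exactly the right vertical map of the diagram in Lemma \ref{lemma:keylemma}. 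Specializing that lemma to $i=1$ and $i=2$, the anticommutativity of the two squares forces $\alpha$ and $\beta$ to share kernels and cokernels (signs being irrelevant here) with $\delta^1$ and $\delta^2$ respectively, so that
\[
\hh{1}{H}{\overline{\nabla}} = \card{\Coker \delta^1}\cdot\card{\Ker \delta^2}.
\]

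Next I would analyse $\card{\Ker \delta^2}$ via the factorisation $\delta^2 = \delta^{3}_{\Qq{}{}}\circ \delta^{2}_{\Cl{}{}}$. The long exact sequence attached to \eqref{eq:qccl}, combined with $\HH{3}{H}{\idCl{K}} \cong \HHT{1}{H}{\Z} = 0$, shows that $\delta^{2}_{\Cl{}{}}$ is surjective, and therefore a straightforward diagram chase supplies a short exact sequence
\[
0 \to \Ker \delta^{2}_{\Cl{}{}} \to \Ker \delta^2 \to \Ker \delta^{3}_{\Qq{}{}} \to 0.
\]
Using in addition the vanishing $\HH{1}{H}{\idCl{K}} = 0$ (first inequality of global class field theory), the same long exact sequence restricts to
\[
0 \to \HH{1}{H}{\Cl{K}{S}} \xrightarrow{\delta^{1}_{\Cl{}{}}} \HH{2}{H}{\Qq{K}{S}} \to \HH{2}{H}{\idCl{K}} \to \HH{2}{H}{\Cl{K}{S}} \xrightarrow{\delta^{2}_{\Cl{}{}}} \HH{3}{H}{\Qq{K}{S}} \to 0,
\]
whose middle three terms give $\card{\Ker \delta^{2}_{\Cl{}{}}} = \hh{2}{H}{\idCl{K}}/\card{\Coker \delta^{1}_{\Cl{}{}}}$. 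Substituting into the two displays above produces precisely the stated formula.

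The only real obstacle is careful bookkeeping: one has to check that under the dimension-shifting isomorphism coming from \eqref{eq:mainrw} (which is canonical up to sign), the maps induced on cohomology by \eqref{eq:pies} really correspond to $\delta^i = \delta^{i+1}_{\Qq{}{}}\circ \delta^{i}_{\Cl{}{}}$ for $i = 1, 2$, so that signs disappear upon counting, and that the two class field theory vanishings apply at the correct cohomological degrees. Both points are formal, so no further substantive difficulty is expected.
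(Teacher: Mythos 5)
Your proof is correct and takes essentially the same route as the paper: you read off $\hh{1}{H}{\overline{\nabla}} = \card{\Coker \delta^1}\cdot\card{\Ker \delta^2}$ from the long exact sequence of \eqref{eq:pies} via the anticommutative squares of Lemma \ref{lemma:keylemma}, and then compute $\card{\Ker \delta^2}$ by factoring $\delta^2 = \delta^{3}_{\Qq{}{}}\circ\delta^{2}_{\Cl{}{}}$ and invoking the vanishing of $\HH{1}{H}{\idCl{K}}$ and $\HH{3}{H}{\idCl{K}}$, exactly as the paper does.
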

\begin{proof}
From the exact sequence
\[
0\to \Coker\left(\HH{1}{H}{\Cl{K}{S}}\to \HH{1}{H}{\nabla}\right) \to \HH{1}{H}{\overline{\nabla}}\to \Ker\left(\HH{2}{H}{\Cl{K}{S}}\to\HH{2}{H}{\nabla}\right)\to 0
\]
we deduce
\[
\hh{1}{H}{\overline{\nabla}} = \card{\Ker(\HH{2}{H}{\Cl{K}{S}}\to\HH{2}{H}{\nabla})}\cdot\card{\Coker(\HH{1}{H}{\Cl{K}{S}}\to \HH{1}{H}{\nabla})}.
\]
Now in the diagram
\[
\begin{tikzcd}
\HH{2}{H}{\Cl{K}{S}} \arrow[d,"\delta^2_{\Cl{}{}}"]\arrow[rd,"\delta^2"]\arrow[r]& \HH{2}{H}{\nabla} \arrow[d] \\
\HH{3}{H}{\Qq{K}{S}} \arrow[r,"\delta^3_{\Qq{}{}}"] & \HH{4}{H}{\units{K}{S}}
\end{tikzcd}
\]
the bottom left triangle is commutative and the top left one is anticommutative by Lemma \ref{lemma:keylemma}. We deduce an exact sequence
\[
0 \to \Ker \delta^2_{\Cl{}{}} \to \Ker \delta^2 \to \Ker \delta^3_{\Qq{}{}} \to 0
\]
where exactness on the right comes from 
\[
\Coker \delta^2_{\Cl{}{}}  \subseteq  \HH{3}{H}{\idCl{K}} \cong \HH{1}{H}{\Z} = 0
\]
by class field theory. Finally observe that from the long exact cohomology sequence of \eqref{eq:qccl} we can extract a short exact sequence
\[
0 \to \Coker \delta^1_{\Cl{}{}} \to \HH{2}{H}{\idCl{K}}\to \Ker \delta^2_{\Cl{}{}}\to 0.
\]
Putting everything together and using that the right-hand vertical map of the diagram of Lemma \ref{lemma:keylemma} is an isomorphism for any $i$, we deduce
\begin{align*}
\hh{1}{H}{\overline{\nabla}} &= \card{\Ker(\HH{2}{H}{\Cl{K}{S}}\to\HH{2}{H}{\nabla})}\cdot\card{\Coker(\HH{1}{H}{\Cl{K}{S}}\to \HH{1}{H}{\nabla})}\\
&=\card{\Ker \delta^2}\cdot \card{\Coker \delta^1}\\
&=\card{\Ker \delta^2_{\Cl{}{}}}\cdot\card{\Ker \delta^3_{\Qq{}{}}}\cdot\card{\Coker \delta^1}\\
&=\frac{\hh{2}{H}{\idCl{K}}}{\card{\Coker \delta^1_{\Cl{}{}}}}\cdot\card{\Ker \delta^3_{\Qq{}{}}}\cdot\card{\Coker \delta^1}.\qedhere
\end{align*}
\end{proof}

\begin{lemma}\label{lemma:h1wh3u}
For any subgroup $H \leq G$ we have
\[
\hh{1}{H}{\WW{S}} = \hh{3}{H}{\idunits{K}{S}}.
\]
\end{lemma}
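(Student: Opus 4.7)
The plan is to decompose both sides into direct sums of local cohomology groups indexed by the same finite set of primes of $K$, and then check the local equalities prime-by-prime using Proposition~\ref{prop:il} and the recalled cohomological local class field theory.

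First I would organize $\idunits{K}{S}$ by grouping local factors according to $G$-orbits of primes: for each prime $\mathfrak{p}$ of $k$, one has $\prod_{\mathfrak{P}\mid\mathfrak{p}} K_\mathfrak{P}^\times \cong \Ind_{G_\mathfrak{P}}^G K_\mathfrak{P}^\times$ for any chosen $\mathfrak{P}\mid\mathfrak{p}$, and similarly for the factors $\prod_{\mathfrak{P}\mid\mathfrak{p}} U_\mathfrak{P}$. Since group cohomology of the finite group $H$ commutes with arbitrary products, and using Mackey's decomposition together with Shapiro's lemma on each factor, $\HH{3}{H}{\idunits{K}{S}}$ decomposes as a direct sum of $\HH{3}{H_\mathfrak{Q}}{K_\mathfrak{Q}^\times}$ (for $\mathfrak{Q}$ lying over a prime in $S_k$) and $\HH{3}{H_\mathfrak{Q}}{U_\mathfrak{Q}}$ (for $\mathfrak{Q}$ lying over a prime outside $S_k$), as $\mathfrak{Q}$ runs over a system of $H$-orbit representatives of the primes of $K$.

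The next step is to identify the non-vanishing contributions. For $\mathfrak{Q}\in S$, local class field theory (as recalled in the remark after Proposition~\ref{prop:il}) gives $\HH{3}{H_\mathfrak{Q}}{K_\mathfrak{Q}^\times} \cong \HHT{1}{H_\mathfrak{Q}}{\Z} = 0$. For $\mathfrak{Q}\notin S$ unramified in $K/k$, the module $U_\mathfrak{Q}$ is cohomologically trivial over $G_\mathfrak{Q}$, hence over $H_\mathfrak{Q}$. So only the primes $\mathfrak{Q}$ of $K$ that are ramified in $K/k$ and lie outside $S$ contribute. The same Shapiro/Mackey computation applied to the explicit description $\WW{S} = \bigoplus_{\mathfrak{P}\in S_*^{ram}} \Ind_{G_\mathfrak{P}}^G \dual{W_\mathfrak{P}}$ decomposes $\HH{1}{H}{\WW{S}}$ over the same index set, yielding
\[
\HH{3}{H}{\idunits{K}{S}} \cong \bigoplus_\mathfrak{Q} \HH{3}{H_\mathfrak{Q}}{U_\mathfrak{Q}}, \qquad \HH{1}{H}{\WW{S}} \cong \bigoplus_\mathfrak{Q} \HH{1}{H_\mathfrak{Q}}{\dual{W_\mathfrak{Q}}},
\]
where $\mathfrak{Q}$ runs over the same system of $H$-orbit representatives of the primes of $K$ ramified in $K/k$ and outside $S$.

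It then remains to verify the local equality $\HH{1}{H_\mathfrak{Q}}{\dual{W_\mathfrak{Q}}} \cong \HH{3}{H_\mathfrak{Q}}{U_\mathfrak{Q}}$ for each such $\mathfrak{Q}$, which is precisely the dimension-shifting argument already used in the proof of Lemma~\ref{lemma:rcwloc}. Since $\Z[G_\mathfrak{Q}]$ is free, and in particular cohomologically trivial, as a $\Z[H_\mathfrak{Q}]$-module, the exact sequence~\eqref{eq:wdualwes} gives $\HH{1}{H_\mathfrak{Q}}{\dual{W_\mathfrak{Q}}} \cong \HH{2}{H_\mathfrak{Q}}{W_\mathfrak{Q}}$, and the central column of diagram~\eqref{eq:wdiag}, whose middle term $V$ is cohomologically trivial, further shifts this to $\HH{3}{H_\mathfrak{Q}}{U_\mathfrak{Q}}$. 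Taking orders completes the proof. The main delicate point is the bookkeeping in the Mackey decomposition, which confirms that ``ramified in $K/k$, outside $S$'' is exactly the condition picking out the non-zero contributions on both sides; once this is settled, everything is a matter of invoking the local machinery already available.
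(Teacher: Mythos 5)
Your argument is correct and follows essentially the same route as the paper: reduce via Shapiro/Mackey to local components, kill the contributions from primes in $S$ by local class field theory and from unramified primes outside $S$ by cohomological triviality of local units, and handle the remaining ramified primes outside $S$ by dimension shifting through the cohomologically trivial modules $V_\mathfrak{P}$ and $\Z[G_\mathfrak{P}]\oplus\Z[G_\mathfrak{P}]$. The only difference is that you spell out the orbit bookkeeping more explicitly than the paper does, which is harmless.
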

\begin{proof}
Let $\mathfrak{P}$ be a prime of $K$. If $\mathfrak{P}$ is unramified in $K/k$, then $\hh{3}{H_{\mathfrak{P}}}{\locunits{\mathfrak{P}}}=1$. If, instead, $\mathfrak{P}$ is ramified in $K/k$ but belongs to $S$, then the local component of $\HH{3}{H}{\idunits{K}{S}}$ at $\mathfrak{P}$ is $\HH{3}{H_{\mathfrak{P}}}{K^\times_{\mathfrak{P}}}$. This is isomorphic to $\HH{1}{H_{\mathfrak{P}}}{\Z}$ by local class field theory and hence trivial. So, by Shapiro's lemma, we are left to prove that
\[
\hh{1}{H_{\mathfrak{P}}}{\dual{W_{\mathfrak{P}}}} = \hh{3}{H_{\mathfrak{P}}}{\locunits{\mathfrak{P}}}
\]
for $\mathfrak{P}$ ramified and not in $S$. By Proposition \ref{prop:il}, there is an exact sequence of $H_{\mathfrak{P}}$-modules 
\[
0\to \locunits{\mathfrak{P}} \to V_\mathfrak{P} \to W_\mathfrak{P} \to 0
\]
where $V_\mathfrak{P}$ is cohomologically trivial. We deduce that
\[
\HH{3}{H_{\mathfrak{P}}}{\locunits{\mathfrak{P}}} \cong \HH{2}{H_{\mathfrak{P}}}{W_{\mathfrak{P}}}.
\]
Moreover, by \eqref{eq:wdualwes}, we have an isomorphism
\[
\HH{2}{H_{\mathfrak{P}}}{W_{\mathfrak{P}}}\cong\HH{1}{H_{\mathfrak{P}}}{\dual{(W_{\mathfrak{P}})}}
\]
which concludes the proof.
\end{proof}

For future use we collect the above results in the following corollary.
\begin{corollary}\label{cor:psiratio}
For a Brauer relation $\Theta= \sum_{H\leq G} n_H H$ in $G$ we have
\[
\frac{\kd{\Theta}{\iota}}{\kd{\Theta}{\eta}} = \prod_{H\leq G} \left(\frac{\card{\Coker\left(\HH{2}{H}{\units{K}{S}}\to\HH{2}{H}{\idunits{K}{S}}\right)}}{\hh{2}{H}{\idCl{K}}}\right)^{n_H}
\]
where $\iota$ and $\eta$ are the maps appearing in \eqref{eq:pies} and \eqref{eq:epsilones}, respectively.
\end{corollary}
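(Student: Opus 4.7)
The plan is to combine the four preceding results (equations \eqref{eq:psiiota}, \eqref{eq:psieta} and Lemmas \ref{lemma:psiiota}, \ref{lemma:psieta}, \ref{lemma:h1wh3u}) into a single formula for the ratio $\kd{\Theta}{\iota}/\kd{\Theta}{\eta}$ and then recognize the remaining expression as the cokernel order appearing in the statement via the long exact cohomology sequence of \eqref{eq:euq}.

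More precisely, I first substitute Lemma \ref{lemma:psiiota} into \eqref{eq:psiiota} and Lemmas \ref{lemma:psieta}, \ref{lemma:h1wh3u} into \eqref{eq:psieta}. Forming the ratio of the two resulting products over $H\leq G$, the factors $\card{\Coker\delta^1}/\card{\Coker\delta^1_{\Cl{}{}}}$ (which are the mysterious ``linking'' terms coming from the anticommutative square of Lemma \ref{lemma:keylemma}) appear with the same sign in both $\kd{\Theta}{\iota}$ and $\kd{\Theta}{\eta}$ and therefore cancel. What remains is the product over $H\leq G$ of
\[
\left[\frac{\hh{2}{H}{\Qq{K}{S}}\,\hh{3}{H}{\idunits{K}{S}}}{\hh{3}{H}{\units{K}{S}}\,\hh{2}{H}{\idCl{K}}\,\card{\Ker \delta^3_{\Qq{}{}}}}\right]^{n_H}.
\]

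The last step is to identify the numerator-to-denominator quotient (excluding $\hh{2}{H}{\idCl{K}}$) with $\card{\Coker(\HH{2}{H}{\units{K}{S}}\to \HH{2}{H}{\idunits{K}{S}})}$. This is where the long exact cohomology sequence associated to \eqref{eq:euq} enters: exactness at $\HH{2}{H}{\idunits{K}{S}}$ identifies the cokernel with $\Ker\delta^2_{\Qq{}{}}$, and then walking two more steps along the sequence expresses $\card{\Ker\delta^2_{\Qq{}{}}}$ as $\hh{2}{H}{\Qq{K}{S}}$ divided by the image of $\delta^2_{\Qq{}{}}$, which in turn equals $\hh{3}{H}{\units{K}{S}}$ divided by the image of $\HH{3}{H}{\units{K}{S}}\to\HH{3}{H}{\idunits{K}{S}}$, whose size is $\hh{3}{H}{\idunits{K}{S}}/\card{\Ker\delta^3_{\Qq{}{}}}$. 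Chaining these three equalities gives the desired identification and finishes the proof.

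I do not expect any genuine obstacle here: everything reduces to bookkeeping on a single long exact sequence once the cancellations have been made. The only point that requires a moment's care is verifying that the ``coupling'' factors $\card{\Coker\delta^1}/\card{\Coker\delta^1_{\Cl{}{}}}$ appear with matching exponents in Lemmas \ref{lemma:psiiota} and \ref{lemma:psieta} so that they really do cancel; this was precisely the purpose of the anticommutative diagram in Lemma \ref{lemma:keylemma} and is what makes the statement clean.
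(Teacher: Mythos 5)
Your proposal is correct and follows exactly the paper's own proof: substitute \eqref{eq:psiiota}, \eqref{eq:psieta} and Lemmas \ref{lemma:psiiota}, \ref{lemma:psieta} so that the factors $\card{\Coker\delta^1}/\card{\Coker\delta^{1}_{\Cl{}{}}}$ cancel, replace $\hh{1}{H}{\WW{S}}$ by $\hh{3}{H}{\idunits{K}{S}}$ via Lemma \ref{lemma:h1wh3u}, and identify the remaining ratio with the cokernel order by walking along the long exact cohomology sequence of \eqref{eq:euq}. Your explicit bookkeeping for that last identification is accurate (the paper leaves it implicit), so nothing further is needed.
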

\begin{proof}
We have
\begin{align*}
\frac{\kd{\Theta}{\iota}}{\kd{\Theta}{\eta}} &= \prod_{H\leq G} \left(\frac{\hh{2}{H}{\Qq{K}{S}}\hh{1}{H}{\WW{S}}}{\hh{3}{H}{\units{K}{S}}\hh{2}{H}{\idCl{K}}\card{\Ker\delta^3_{\Qq{}{}}}}\right)^{n_H}\quad \text{(\eqref{eq:psiiota}, \eqref{eq:psieta}, Lemmas \ref{lemma:psiiota} and \ref{lemma:psieta})}\\
&= \prod_{H\leq G} \left(\frac{\hh{2}{H}{\Qq{K}{S}}\hh{3}{H}{\idunits{K}{S}}}{\hh{3}{H}{\units{K}{S}}\hh{2}{H}{\idCl{K}}\card{\Ker\delta^3_{\Qq{}{}}}}\right)^{n_H}\quad \text{(Lemma \ref{lemma:h1wh3u})}\\
&=\prod_{H\leq G} \left(\frac{\card{\Coker\left(\HH{2}{H}{\units{K}{S}}\to\HH{2}{H}{\idunits{K}{S}}\right)}}{\hh{2}{H}{\idCl{K}}}\right)^{n_H}
\end{align*}
where the last equality comes from the long exact cohomology sequence of \eqref{eq:euq}.
\end{proof}

The term involving $\WW{S}$ in the above corollary can be rewritten in terms of idelic units, as the following lemma shows.

We now apply the results of Section \ref{sec:rc} to the exact sequences of Theorem \ref{thm:rwes} to get a first formula for the regulator constants of the units. The formula will be further simplified later.

\begin{proposition}\label{prop:firststep}
For any Brauer relation $\Theta=\sum_{H\leq G} n_H H$ in $G$ we have
\[
\rc{\Theta}{\units{K}{S}} =\rc{\Theta}{\Cl{K}{S}} \rc{\Theta}{\Z}^{-1} \rc{\Theta}{\Z[S]} \rc{\Theta}{\WW{S}} \left(\frac{\kd{\Theta}{\iota}}{\kd{\Theta}{\eta}}\right)^2 \prod_{H\leq G}\left(\frac{\hh{2}{H}{\units{K}{S}}}{\hh{1}{H}{\units{K}{S}}}\right)^{2n_H}.
\]
\end{proposition}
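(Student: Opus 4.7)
The plan is to feed the three exact sequences provided by Theorem \ref{thm:rwes} into the machinery of Section \ref{sec:rc}. Since every finitely generated $\Z[G]$-module is self-dual over $\Q$, every module involved is a legitimate input for the regulator constants, and no additional verification is needed on that front.

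First I would apply Corollary \ref{cor:rces} to the four-term exact sequence $0\to \units{K}{S}\to A\to B\to \nabla\to 0$. Since $A$ and $B$ are cohomologically trivial and $r=2$, the corollary immediately yields
\[
\rc{\Theta}{\units{K}{S}} = \rc{\Theta}{\nabla}\prod_{H\leq G}\left(\frac{\hh{2}{H}{\units{K}{S}}}{\hh{1}{H}{\units{K}{S}}}\right)^{2n_H},
\]
which already accounts for the last factor of the claim and reduces the problem to unpacking $\rc{\Theta}{\nabla}$.

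To compute $\rc{\Theta}{\nabla}$, I would apply Proposition \ref{prop:rces} to the two short exact sequences \eqref{eq:pies} and \eqref{eq:epsilones}, obtaining
\[
\rc{\Theta}{\nabla} = \rc{\Theta}{\Cl{K}{S}}\cdot\rc{\Theta}{\overline{\nabla}}\cdot\kd{\Theta}{\iota}^2
\]
and
\[
\rc{\Theta}{\Z[S]\oplus\WW{S}} = \rc{\Theta}{\overline{\nabla}}\cdot\rc{\Theta}{\Z}\cdot\kd{\Theta}{\eta}^2.
\]
Applying Proposition \ref{prop:rces} once more to the split exact sequence $0\to \Z[S]\to \Z[S]\oplus\WW{S}\to \WW{S}\to 0$ (for which $\kd{\Theta}{\cdot}=1$) gives the multiplicativity $\rc{\Theta}{\Z[S]\oplus\WW{S}} = \rc{\Theta}{\Z[S]}\cdot\rc{\Theta}{\WW{S}}$. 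Solving for $\rc{\Theta}{\overline{\nabla}}$ from the second identity, substituting into the first, and feeding the result into the displayed equation above reconstructs precisely the claimed formula.

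I do not expect any serious obstacle: the entire argument is a bookkeeping assembly of the short exact sequence formulas from Section \ref{sec:rc}. The delicate cohomological work, which goes into simplifying $\kd{\Theta}{\iota}/\kd{\Theta}{\eta}$ and $\rc{\Theta}{\WW{S}}$, is deferred to the surrounding lemmas of Section \ref{sec:tes} and is therefore not invoked here.
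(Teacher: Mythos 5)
Your proposal is correct and is essentially identical to the paper's own proof: Corollary \ref{cor:rces} applied to \eqref{eq:mainrw}, then Proposition \ref{prop:rces} applied to \eqref{eq:pies} and \eqref{eq:epsilones}, with the split sum $\Z[S]\oplus\WW{S}$ factored multiplicatively. The only difference is that you spell out the intermediate solve for $\rc{\Theta}{\overline{\nabla}}$, which the paper compresses into a three-line chain of equalities.
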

\begin{proof}
Starting with the exact sequence \eqref{eq:mainrw} and using Corollary \ref{cor:rces} and Proposition \ref{prop:rces} we get
\begin{align*}
\rc{\Theta}{\units{K}{S}} & = \rc{\Theta}{\nabla} \prod_{H\leq G}\left(\frac{\hh{2}{H}{\units{K}{S}}}{\hh{1}{H}{\units{K}{S}}}\right)^{2n_H} \\
& = \rc{\Theta}{\Cl{K}{S}} \rc{\Theta}{\overline{\nabla}} \kd{\Theta}{\iota}^2 \prod_{H\leq G}\left(\frac{\hh{2}{H}{\units{K}{S}}}{\hh{1}{H}{\units{K}{S}}}\right)^{2n_H}\\
& = \rc{\Theta}{\Cl{K}{S}} \rc{\Theta}{\Z}^{-1} \rc{\Theta}{\Z[S]} \rc{\Theta}{\WW{S}} \left(\frac{\kd{\Theta}{\iota}}{\kd{\Theta}{\eta}}\right)^2 \prod_{H\leq G}\left(\frac{\hh{2}{H}{\units{K}{S}}}{\hh{1}{H}{\units{K}{S}}}\right)^{2n_H}. \qedhere
\end{align*}
\end{proof}

We now proceed to compute the regulator constants appearing in the right-hand side of the formula of Proposition \ref{prop:firststep}.

\begin{lemma}\label{lemma:rcws}
For any Brauer relation $\Theta$ in $G$ we have $\rc{\Theta}{\WW{S}} = 1$.
\end{lemma}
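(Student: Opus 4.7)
The plan is to reduce the computation of $\rc{\Theta}{\WW{S}}$ to the local regulator constants $\rc{\cdot}{\dual{(W_{\mathfrak{P}})}}$, which were already shown to be trivial in Lemma \ref{lemma:rcwloc}. The passage from local to global is carried out via the decomposition of $\WW{S}$ as a direct sum of induced modules, followed by the restriction-induction adjunction for regulator constants.

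First, I would use the multiplicativity of regulator constants on split short exact sequences (the last assertion of Proposition \ref{prop:rces}) to break the direct sum defining $\WW{S}$ into its summands, obtaining
\[
\rc{\Theta}{\WW{S}} = \prod_{\mathfrak{P}\in S_*^{ram}} \rc{\Theta}{\Ind^G_{G_{\mathfrak{P}}}\dual{(W_{\mathfrak{P}})}}.
\]
Next, I would apply Proposition \ref{prop:rcprop}\ref{prop:rcpropres} to rewrite each factor as
\[
\rc{\Theta}{\Ind^G_{G_{\mathfrak{P}}}\dual{(W_{\mathfrak{P}})}} = \rc{\Res^G_{G_{\mathfrak{P}}}\Theta}{\dual{(W_{\mathfrak{P}})}},
\]
identifying $G_{\mathfrak{P}}$ with the local Galois group of $K_{\mathfrak{P}}$ over its fixed field (the setting in which Lemma \ref{lemma:rcwloc} was proved). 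Since $\Res^G_{G_{\mathfrak{P}}}\Theta$ is a Brauer relation in $G_{\mathfrak{P}}$, Lemma \ref{lemma:rcwloc} immediately yields $\rc{\Res^G_{G_{\mathfrak{P}}}\Theta}{\dual{(W_{\mathfrak{P}})}} = 1$ for each $\mathfrak{P}$, and the claim follows by taking the product.

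The proof is essentially a bookkeeping step: the arithmetic content of the lemma already lies in Lemma \ref{lemma:rcwloc}, and the remaining work is purely formal. The only point to verify before invoking Proposition \ref{prop:rcprop}\ref{prop:rcpropres} is that $\dual{(W_{\mathfrak{P}})}$ is self-dual as a $\Z[G_{\mathfrak{P}}]$-module, but this holds automatically by the remark following Definition \ref{def:nd} stating that every finitely generated $\Z[G]$-module is self-dual over $\Q$. No genuine obstacle is expected.
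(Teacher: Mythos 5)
Your proposal is correct and follows exactly the paper's own argument: split the direct sum via Proposition \ref{prop:rces}, convert each induced summand to a local regulator constant via Proposition \ref{prop:rcprop}\ref{prop:rcpropres}, and apply Lemma \ref{lemma:rcwloc}. The additional remark on self-duality is a reasonable sanity check but introduces nothing beyond the paper's reasoning.
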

\begin{proof}
By Propositions \ref{prop:rces} and \ref{prop:rcprop} and Lemma \ref{lemma:rcwloc} we have 
\[
\rc{\Theta}{\WW{S}} = \prod_{\mathfrak{P}\in S_*^{ram}} \rc{\Theta}{\Ind^G_{G_{\mathfrak{P}}} \dual{(W_{\mathfrak{P}})}} = \prod_{\mathfrak{P}\in S_*^{ram}}\rc{\Res^G_{G_{\mathfrak{P}}}\Theta}{\dual{(W_{\mathfrak{P}})}} = 1.\qedhere
\]
\end{proof}

\begin{lemma}\label{lemma:rczs}
For any Brauer relation $\Theta=\sum_{H\leq G}n_H H$ in $G$ we have 
\[
\rc{\Theta}{\Z[S]} = \prod_{H\leq G}\left(\frac{\hh{1}{H}{\idunits{K}{S}}}{\hh{2}{H}{\idunits{K}{S}}}\right)^{n_H}.
\]
\end{lemma}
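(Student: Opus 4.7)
The plan is to compute both sides by decomposing $\Z[S]$ and $\idunits{K}{S}$ into direct sums of modules induced from decomposition subgroups of the primes of $K$, and then compare the local contributions one prime at a time.

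On the left-hand side, I would write $\Z[S] = \bigoplus_{\mathfrak{P}\in S_*}\Ind_{G_\mathfrak{P}}^G\Z$ (with $S_*$ a set of representatives for the $G$-orbits in $S$), combine the multiplicativity of regulator constants on split exact sequences (a special case of Proposition \ref{prop:rces}) with the induction–restriction formula in Proposition \ref{prop:rcprop}\ref{prop:rcpropres}, and evaluate Definition \ref{def:rc} on $\Z$ using the trivial pairing $\langle a,b\rangle=ab$, which yields $\rc{\sum_D m_D D}{\Z}=\prod_D|D|^{-m_D}$ for any Brauer relation in a finite group. Writing $\Res_{G_\mathfrak{P}}^G\Theta=\sum_D m_D(\mathfrak{P})\,D$, this gives
\[
\rc{\Theta}{\Z[S]}=\prod_{\mathfrak{P}\in S_*}\prod_{D\leq G_\mathfrak{P}}|D|^{-m_D(\mathfrak{P})}.
\]

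For the right-hand side I would decompose $\idunits{K}{S}=\bigoplus_{\mathfrak{P}\in S_*}\Ind_{G_\mathfrak{P}}^G K_\mathfrak{P}^\times\oplus\bigoplus_{\mathfrak{P}\in T_*}\Ind_{G_\mathfrak{P}}^G\locunits{\mathfrak{P}}$, with $T_*$ a set of orbit representatives for the primes outside $S$, and apply Shapiro's lemma together with Mackey's decomposition---rearranged exactly as in the proof of Proposition \ref{prop:rcprop}\ref{prop:rcpropres}---so that each cohomology product on the right-hand side collapses into a product indexed by $\mathfrak{P}$ and $D\leq G_\mathfrak{P}$, with exponent $m_D(\mathfrak{P})$.

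It then remains to verify, for each prime $\mathfrak{P}$, a local identity. For $\mathfrak{P}\in S_*$ this follows from Hilbert's Theorem 90 ($\hh{1}{D}{K_\mathfrak{P}^\times}=1$) together with local class field theory ($\hh{2}{D}{K_\mathfrak{P}^\times}=|D|$), both valid also at archimedean primes, so the local factor on the right equals $\prod_D|D|^{-m_D(\mathfrak{P})}$, matching the corresponding factor of $\rc{\Theta}{\Z[S]}$. For unramified $\mathfrak{P}\in T_*$ the module $\locunits{\mathfrak{P}}$ is cohomologically trivial and the local factor is $1$. The main obstacle is the case of ramified $\mathfrak{P}\in T_*$, i.e.\ $\mathfrak{P}\in S_*^{ram}$: here $\locunits{\mathfrak{P}}$ is not cohomologically trivial, but one can invoke the same input already used in Lemma \ref{lemma:rcwloc}, namely the identity $\hh{1}{D}{\locunits{\mathfrak{P}}}=\hh{2}{D}{\locunits{\mathfrak{P}}}$ for every $D\leq G_\mathfrak{P}$ (both being equal to the ramification index of $K_\mathfrak{P}/(K_\mathfrak{P})^D$, by \cite[Corollary 4.3]{GruenbergWeiss1996}). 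Each ratio is then trivially $1$, so the contribution of ramified primes outside $S$ is $1$, and combining the three cases yields the equality.
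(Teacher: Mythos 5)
Your proposal is correct and follows essentially the same route as the paper: decompose $\idunits{K}{S}$ semilocally via Shapiro/Mackey, kill the primes outside $S$ using $\hh{1}{D}{\locunits{\mathfrak{P}}}=\hh{2}{D}{\locunits{\mathfrak{P}}}$ from \cite[Corollary 4.3]{GruenbergWeiss1996} (the paper does not even need to separate the ramified and unramified cases), evaluate the primes in $S$ by Hilbert's Theorem 90 and local class field theory, and match the resulting double-coset product with $\rc{\Theta}{\Z[S]}$. The only cosmetic difference is that you rederive the permutation-module regulator constant from Proposition \ref{prop:rcprop}\ref{prop:rcpropres} and the computation of $\rc{\Theta}{\Z}$, where the paper simply cites \cite[Example 2.19]{DokchitserDokchitser2009}.
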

\begin{proof}
For a prime $\mathfrak{P}$ of $K$, let $M_{\mathfrak{P}}$ denote $K_{\mathfrak{P}}^\times$ if $\mathfrak{P}\in S$ and $\locunits{\mathfrak{P}}$ otherwise. Then, by Shapiro's lemma, one has for any $i$
\[
\HH{i}{H}{\idunits{K}{S}} = \prod_{\mathscr{p}}\prod_{\mathfrak{p}|\mathscr{p}}
\HH{i}{H}{\oplus_{\mathfrak{P}|\mathfrak{p}}M_{\mathfrak{P}}} = \prod_{\mathscr{p}}\prod_{\mathfrak{p}|\mathscr{p}} \HH{i}{H_{\tilde{\mathfrak{P}}}}{M_{\tilde{\mathfrak{P}}}}
\]
where the outer product is taken over all primes $\mathscr{p}$ in $k$, the inner product over primes $\mathfrak{p}$ in $K^H$ (dividing a given $\mathscr{p}$ in $k$) and the sum in the cohomology group is over primes $\mathfrak{P}$ of $K$ (dividing a given $\mathfrak{p}$ in $K^H$). Moreover, for any prime $\mathfrak{p}$ of $K^H$, we have fixed a prime $\tilde{\mathfrak{P}}$ of $K$ dividing $\mathfrak{p}$. 
 
For a prime  $\mathfrak{P}$ of $K$ not in $S$, we have $\hh{1}{H_{\mathfrak{P}}}{M_{\mathfrak{P}}}=\hh{2}{H_{\mathfrak{P}}}{M_{\mathfrak{P}}}$ by \cite[Corollary 4.3]{GruenbergWeiss1996}. If, instead, $\mathfrak{P}$ belongs to $S$, we have $\HH{1}{H_{\mathfrak{P}}}{M_{\mathfrak{P}}}=0$ by Hilbert's Theorem 90. Moreover by local class field theory $\HH{2}{H_{\mathfrak{P}}}{M_{\mathfrak{P}}} \cong \HHT{0}{H_{\mathfrak{P}}}{\Z} \cong \Z/\card{H_{\mathfrak{P}}}$. Therefore we obtain
\[
\prod_{H\leq G}\left(\frac{\hh{1}{H}{\idunits{K}{S}}}{\hh{2}{H}{\idunits{K}{S}}}\right)^{n_H} = \prod_{H\leq G}\prod_{\mathscr{p}\in S_k}\prod_{\mathfrak{p}|\mathscr{p}} \card{H_{\tilde{\mathfrak{P}}}}^{-n_H}
\]
where $S_k$ is the set of primes of $k$ which lies below primes in $S$. 

Now fix a prime $\mathscr{p}\in S_k$. Then the set of primes of $K^H$ dividing $\mathscr{p}$ is in one-to-one correspondence with the double cosets $H\backslash G\slash G_{\mathfrak{P}_*}$ where $\mathfrak{P}_*$ is a prime above $\mathscr{p}$ in $K$. Moreover $H_{\tilde{\mathfrak{P}}}=G_{\tilde{\mathfrak{P}}}\cap H$. We deduce that
\[
\prod_{H\leq G}\left(\frac{\hh{1}{H}{\idunits{K}{S}}}{\hh{2}{H}{\idunits{K}{S}}}\right)^{n_H} = \prod_{H\leq G}\prod_{\mathscr{p}\in S_k} \prod_{\tilde{\mathfrak{P}}\in H\backslash G/G_{\mathfrak{P}_*}}\card{G_{\tilde{\mathfrak{P}}}\cap H}^{-n_H}.
\]
The right-hand term of the above equality is equal to $\rc{\Theta}{\Z[S]}$ by \cite[Example 2.19]{DokchitserDokchitser2009}.
\end{proof}

\begin{lemma}\label{lemma:rczh2c}
For any Brauer relation $\Theta$ in $G$ we have
\[
\rc{\Theta}{\Z} = \prod_{H\leq G}\hh{2}{H}{\idCl{K}}^{-n_H}.
\]
\end{lemma}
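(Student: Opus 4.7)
The plan is to reduce the claim to two independent computations: a direct evaluation of $\rc{\Theta}{\Z}$ from Definition \ref{def:rc}, and the standard identification $\hh{2}{H}{\idCl{K}} = \card{H}$ from global class field theory.

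First I would unwind the definition of $\rc{\Theta}{\Z}$ when $\Z$ carries the trivial $G$-action. For every $H \leq G$ one has $\Z^H = \Z$, which is torsion-free, so the content factor $\card{\tors{\Z^H}}^2 = 1$ disappears. Taking the $G$-invariant pairing given by ordinary multiplication (which is non-degenerate on $\mt{\Z}=\Z$, so admissible by the remark after Definition \ref{def:nd}), and evaluating on the basis $\{1\}$ of $\Z^H$, the matrix of $\frac{1}{\card{H}}\langle\cdot,\cdot\rangle|_{\mt{\Z^H}}$ is just $(1/\card{H})$. Substituting in Definition \ref{def:rc} yields
\[
\rc{\Theta}{\Z} = \prod_{H\leq G}\card{H}^{-n_H}.
\]

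Second, I would apply (cohomological) global class field theory, which is recalled in the remark preceding Lemma \ref{lemma:psiiota} in the form $\HHT{i+2}{H}{\idCl{K}} \cong \HHT{i}{H}{\Z}$. Specialising to $i=0$ and using that $\HH{2} = \HHT{2}$ in positive degree, we get
\[
\HH{2}{H}{\idCl{K}} \cong \HHT{0}{H}{\Z} = \Z^H/N_H\Z = \Z/\card{H}\Z,
\]
so $\hh{2}{H}{\idCl{K}} = \card{H}$. Inserting this into the formula for $\rc{\Theta}{\Z}$ gives exactly the desired identity.

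There is no real obstacle: the only thing to be careful about is that the pairing-and-torsion contribution to the regulator constant of $\Z$ collapses to $1/\card{H}$, which uses that the choice of pairing is immaterial (as noted after Definition \ref{def:rc}) and that $\Z$ is torsion-free. The proof is essentially a bookkeeping exercise once the class field theory input is invoked.
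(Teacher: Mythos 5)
Your proof is correct and follows essentially the same route as the paper: a direct evaluation of $\rc{\Theta}{\Z}$ via the trivial pairing $\langle 1,1\rangle = 1$ (noting $\Z^H=\Z$ is torsion-free), combined with the global class field theory isomorphism $\HH{2}{H}{\idCl{K}}\cong\HHT{0}{H}{\Z}=\Z/\card{H}\Z$. No gaps.
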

\begin{proof}
Choosing the pairing defined by $\langle 1, 1\rangle = 1$ on $\Z^H = \Z$, one sees immediately that
\[
\rc{\Theta}{\Z} = \prod_{H\leq G} \card{H}^{-n_H}.
\]
On the other hand by global class field theory 
\[
\HH{2}{H}{\idCl{K}}\cong \HHT{0}{H}{\Z} = \Z/\card{H}\Z
\]
completing the proof.
\end{proof}

The regulator constants of the class group $\rc{\Theta}{\Cl{K}{S}}$ is immediately given by the following proposition, which is a version of Chevalley's ambiguous class number formula suited for our purposes. 

\begin{proposition}[Ambiguous class number formula]\label{prop:acnf}
The following formula holds for any subgroup $H$ of $G$
\[
\card{\left(\Cl{K}{S}\right)^H}=\cl{K^H}{S}\card{\Coker\left(\HH{2}{H}{\units{K}{S}}\to\HH{2}{H}{\idunits{K}{S}}\right)} \prod_{i=1}^2\left(\frac{\hh{i}{H}{\units{K}{S}}}{\hh{i}{H}{\idunits{K}{S}}}\right)^{(-1)^i}.
\]
\end{proposition}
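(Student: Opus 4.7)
The plan is to take $H$-cohomology of the two short exact sequences \eqref{eq:euq} and \eqref{eq:qccl} and compare them with their $K^H$-analogues. The main inputs from class field theory are $\HH{1}{H}{\idCl{K}} \cong \HHT{-1}{H}{\Z} = 0$ and Hilbert's Theorem~90 for $K^\times$, the latter giving the canonical identification $(\idCl{K})^H = \idCl{K^H}$.

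First I would apply $(-)^H$ to \eqref{eq:qccl}, which yields the exact sequence
\[
0 \to (\Qq{K}{S})^H \to \idCl{K^H} \to (\Cl{K}{S})^H \to \HH{1}{H}{\Qq{K}{S}} \to 0.
\]
Comparison with the $K^H$-analogue of \eqref{eq:qccl}, namely $0 \to \Qq{K^H}{S} \to \idCl{K^H} \to \Cl{K^H}{S} \to 0$, then gives
\[
\card{(\Cl{K}{S})^H} = \cl{K^H}{S} \cdot \frac{\hh{1}{H}{\Qq{K}{S}}}{[(\Qq{K}{S})^H : \Qq{K^H}{S}]}.
\]

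Next I would identify the denominator of the right-hand side. Taking $(-)^H$ of \eqref{eq:euq} and using $(\units{K}{S})^H=\units{K^H}{S}$ and $(\idunits{K}{S})^H=\idunits{K^H}{S}$, the image of $\idunits{K^H}{S}$ in $(\Qq{K}{S})^H$ is exactly $\Qq{K^H}{S}$, so this image has index $\card{\Ker\bigl(\HH{1}{H}{\units{K}{S}} \to \HH{1}{H}{\idunits{K}{S}}\bigr)}$ in $(\Qq{K}{S})^H$.

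Finally, I would compute $\hh{1}{H}{\Qq{K}{S}}$ by extracting from the long exact cohomology sequence of \eqref{eq:euq} the six-term exact sequence of finite groups
\[
0 \to K_1 \to \HH{1}{H}{\units{K}{S}} \to \HH{1}{H}{\idunits{K}{S}} \to \HH{1}{H}{\Qq{K}{S}} \to \HH{2}{H}{\units{K}{S}} \to I_2 \to 0,
\]
where $K_1$ is the kernel isolated in the previous step and $I_2$ is the image of $\HH{2}{H}{\units{K}{S}}$ in $\HH{2}{H}{\idunits{K}{S}}$, of order $\hh{2}{H}{\idunits{K}{S}}/\card{\Coker(\HH{2}{H}{\units{K}{S}} \to \HH{2}{H}{\idunits{K}{S}})}$. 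Setting the alternating product of orders equal to $1$ produces an expression for $\hh{1}{H}{\Qq{K}{S}}/\card{K_1}$ which, substituted into the formula for $\card{(\Cl{K}{S})^H}$ above, yields the statement after routine cancellations. The only real difficulty is careful bookkeeping of the ratios to match the signs $(-1)^i$ in the final product; no deeper obstacle is expected.
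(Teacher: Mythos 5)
Your proposal is correct and follows essentially the same route as the paper: both split \eqref{eq:euccl} via $\Qq{K}{S}$, use $\HH{1}{H}{\idCl{K}}=0$ and $(\idCl{K})^H=\idCl{K^H}$ to compare the $H$-invariants of \eqref{eq:qccl} with its $K^H$-analogue, identify $[(\Qq{K}{S})^H:\Qq{K^H}{S}]$ with $\card{\Ker(\HH{1}{H}{\units{K}{S}}\to\HH{1}{H}{\idunits{K}{S}})}$, and then read off $\hh{1}{H}{\Qq{K}{S}}$ from the long exact sequence of \eqref{eq:euq}. The bookkeeping you outline does close up exactly as claimed, so no changes are needed.
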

\begin{proof}
The inclusion $K^H\hookrightarrow K$ induces a commutative diagram with exact rows
\[
\begin{tikzcd}
0\arrow[r]& \Qq{K^H}{S} \arrow[d]\arrow[r]& \idCl{K^H} \arrow[d]\arrow[r]&\Cl{K^H}{S} \arrow[d]\arrow[r]&0\\
0\arrow[r]& \left(\Qq{K}{S}\right)^H\arrow[r]& \left(\idCl{K}\right)^H\arrow[r]&\left(\Cl{K}{S}\right)^H \arrow[r]&\HH{1}{H}{\Qq{K}{S}}\arrow[r]&0
\end{tikzcd}
\]
Surjectivity of last map on the bottom row follows from $\HH{1}{H}{\idCl{K}}=0$ by class field theory. Moreover the central vertical arrow is an isomorphism. The snake lemma therefore gives an exact sequence
\begin{equation}\label{eq:amb}
0\to \left(\Qq{K}{S}\right)^H/\Qq{K^H}{S} \to \Cl{K^H}{S} \to \left(\Cl{K}{S}\right)^H\to \HH{1}{H}{\Qq{K}{S}}\to 0.
\end{equation}
We also have a commutative diagram with exact rows
\[
\begin{tikzcd}
0\arrow[r]& \units{K^H}{S} \arrow[d]\arrow[r]& \idunits{K^H}{S} \arrow[d]\arrow[r]&\Qq{K^H}{S} \arrow[d]\arrow[r]&0\\
0\arrow[r]& (\units{K}{S})^H\arrow[r]& \left(\idunits{K}{S}\right)^H\arrow[r]&\left(\Qq{K}{S}\right)^H \arrow[r]&\HH{1}{H}{\units{K}{S}}\arrow[r]&\HH{1}{H}{\idunits{K}{S}}
\end{tikzcd}
\]
The central vertical arrow is an isomorphism so we get an isomorphism
\[
\left(\Qq{K}{S}\right)^H/\Qq{K^H}{S} = \Ker\left(\HH{1}{H}{\units{K}{S}}\to\HH{1}{H}{\idunits{K}{S}}\right).
\]
In particular, by continuing the long exact cohomology sequence whose beginning is the bottom row of the above diagram, we get
\[
\card{\left(\Qq{K}{S}\right)^H/\Qq{K^H}{S}} =\hh{1}{H}{\Qq{S}{K}}\card{\Coker\left(\HH{2}{H}{\units{K}{S}}\to\HH{2}{H}{\idunits{K}{S}}\right)} \prod_{i=1}^{2}\left(\frac{\hh{i}{H}{\units{K}{S}}}{\hh{i}{H}{\idunits{K}{S}}}\right)^{(-1)^{i+1}} .
\]
Taking orders in \eqref{eq:amb} and using the above equality, we get the statement.
\end{proof}

Collecting the results obtained so far, one obtains the following formula relating the regulator constants of the group of units with class numbers.

\begin{theorem}\label{thm:rccln}
For any Brauer relation $\Theta$ in $G$ we have
\[
\rc{\Theta}{\units{K}{S}} = \rc{\Theta}{\Z[S]}^{-1}\rc{\Theta}{\Z} \prod_{H\leq G}\left(\cl{K}{S}\right)^{-2n_H}.
\]
\end{theorem}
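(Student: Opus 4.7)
The plan is to start from the master formula provided by Proposition \ref{prop:firststep} and successively eliminate each intermediate quantity using the lemmas we have accumulated. Concretely, Proposition \ref{prop:firststep} gives
\[
\rc{\Theta}{\units{K}{S}} =\rc{\Theta}{\Cl{K}{S}} \rc{\Theta}{\Z}^{-1} \rc{\Theta}{\Z[S]} \rc{\Theta}{\WW{S}} \left(\frac{\kd{\Theta}{\iota}}{\kd{\Theta}{\eta}}\right)^2 \prod_{H\leq G}\left(\frac{\hh{2}{H}{\units{K}{S}}}{\hh{1}{H}{\units{K}{S}}}\right)^{2n_H},
\]
and my first step is to kill $\rc{\Theta}{\WW{S}}$ via Lemma \ref{lemma:rcws}.

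Next I would rewrite the two class-group-related factors. Since $\Cl{K}{S}$ is finite and hence pure torsion, Definition \ref{def:rc} collapses to
\[
\rc{\Theta}{\Cl{K}{S}} = \prod_{H\leq G}\card{(\Cl{K}{S})^H}^{-2n_H},
\]
as the determinant factors are empty products. The ambiguous class number formula (Proposition \ref{prop:acnf}) then lets me replace $\card{(\Cl{K}{S})^H}$ by $\cl{K^H}{S}$ times $\card{\Coker(\HH{2}{H}{\units{K}{S}}\to\HH{2}{H}{\idunits{K}{S}})}$ and ratios of orders of $\HH{i}{H}{\units{K}{S}}$ and $\HH{i}{H}{\idunits{K}{S}}$ for $i=1,2$. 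In parallel, Corollary \ref{cor:psiratio} supplies
\[
\left(\frac{\kd{\Theta}{\iota}}{\kd{\Theta}{\eta}}\right)^2 = \prod_{H\leq G}\left(\frac{\card{\Coker(\HH{2}{H}{\units{K}{S}}\to\HH{2}{H}{\idunits{K}{S}})}}{\hh{2}{H}{\idCl{K}}}\right)^{2n_H}.
\]
Squaring the ambiguous class number formula and multiplying it into our expression for $\rc{\Theta}{\units{K}{S}}$ will cause the cokernel factor to appear with exponent $-2n_H$ once and $+2n_H$ once, and they cancel.

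After these substitutions the formula reduces to a product of $\rc{\Theta}{\Z}^{-1}$, $\rc{\Theta}{\Z[S]}$, the target factor $\prod_H \cl{K^H}{S}^{-2n_H}$, and remaining cohomological factors: an overall $\prod_H \hh{2}{H}{\idCl{K}}^{-2n_H}$ from the $\psi$-ratio, together with a combination of $\hh{i}{H}{\units{K}{S}}$'s and $\hh{i}{H}{\idunits{K}{S}}$'s. The unit cohomology terms will cancel cleanly between the two ratios (those coming from $i=1,2$ in Proposition \ref{prop:acnf} and the explicit factor at the end of Proposition \ref{prop:firststep}), leaving only a factor $\prod_H (\hh{2}{H}{\idunits{K}{S}}/\hh{1}{H}{\idunits{K}{S}})^{2n_H}$. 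By Lemma \ref{lemma:rczs} this equals $\rc{\Theta}{\Z[S]}^{-2}$, and when combined with the $\rc{\Theta}{\Z[S]}$ already present it yields an overall $\rc{\Theta}{\Z[S]}^{-1}$. Similarly, the leftover $\prod_H \hh{2}{H}{\idCl{K}}^{-2n_H}$ combined with the original $\rc{\Theta}{\Z}^{-1}$ becomes $\rc{\Theta}{\Z}$ by Lemma \ref{lemma:rczh2c}, accounting for the sign flips that distinguish the final formula from the one in Proposition \ref{prop:firststep}.

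I do not expect any conceptual obstacle; the proof is a bookkeeping exercise, so the main care will be tracking signs of exponents and verifying that each cohomology order appears the correct number of times, in particular that the factor $\card{\Coker(\HH{2}{H}{\units{K}{S}}\to\HH{2}{H}{\idunits{K}{S}})}$ arising from both the ambiguous class number formula and the $\psi$-ratio cancels exactly. To keep the manipulation organized, I would group terms according to whether they come from units, idelic units, class groups, or idele class groups, and apply each lemma to exactly one such group.
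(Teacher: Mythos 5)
Your proposal is correct and follows essentially the same route as the paper's own proof: combine Proposition \ref{prop:firststep} with Lemma \ref{lemma:rcws}, express $\rc{\Theta}{\Cl{K}{S}}$ via the ambiguous class number formula, cancel the cokernel term against Corollary \ref{cor:psiratio}, and absorb the remaining cohomology orders into $\rc{\Theta}{\Z[S]}$ and $\rc{\Theta}{\Z}$ via Lemmas \ref{lemma:rczs} and \ref{lemma:rczh2c}. All exponents and cancellations you describe check out.
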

\begin{proof}
The results obtained so far in this section allow us to write
\begin{align*}
\rc{\Theta}{\Cl{K}{S}}\left(\frac{\kd{\Theta}{\iota}}{\kd{\Theta}{\eta}}\right)^2 &=\prod_{H\leq G}\left(\cl{K^H}{S}\hh{2}{H}{\idCl{K}} \frac{\hh{2}{H}{\units{K}{S}}}{\hh{1}{H}{\units{K}{S}}}\frac{\hh{1}{H}{\idunits{K}{S}}}{\hh{2}{H}{\idunits{K}{S}}}\right)^{-2n_H} \quad\text{(Prop. \ref{prop:acnf} and Cor. \ref{cor:psiratio})} \\
&=\rc{\Theta}{\Z}^2\rc{\Theta}{\Z[S]}^{-2}\prod_{H\leq G}\left(\cl{K^H}{S} \frac{\hh{2}{H}{\units{K}{S}}}{\hh{1}{H}{\units{K}{S}}}\right)^{-2n_H} \quad\text{(Lemmas \ref{lemma:rczs} and \ref{lemma:rczh2c}).}
\end{align*}
Plugging the above equality in the formula of Proposition \ref{prop:firststep} and using Lemma \ref{lemma:rcws}, we get the statement.
\end{proof}

By an elementary linear algebra computation one can also express the regulator constant of the units in terms of a ratio of regulators and orders of the groups of roots of unity, as shown by Kani and also Bartel. Recall from the introduction that for a subextension $F/k$ of $K/k$, $\reg{F}{S}$ denotes the regulator of the $S_{F}$-units of $F$.

\begin{theorem}\label{thm:rcreg}
For any Brauer relation $\Theta$ in $G$ we have
\[
\rc{\Theta}{\units{K}{S}} = \rc{\Theta}{\roots{K}}\rc{\Theta}{\Z[S]}^{-1}\rc{\Theta}{\Z} \prod_{H\leq G}\left(\reg{K^H}{S}\right) ^{2n_H}.
\]
\end{theorem}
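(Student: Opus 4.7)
The plan is to compute $\rc{\Theta}{\units{K}{S}}$ directly from the bilinear form of Example \ref{ex:rcexa}, by realizing it as the pullback of an elementary pairing on $\R[S]$ via Dirichlet's logarithmic embedding, and then performing linear algebra on $\R[S]$.

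First, I would observe that the Dirichlet embedding $\lambda\colon\units{K}{S}\otimes\R\to\R[S]$ defined by $u\mapsto \sum_{w\in S}\log|u|_w\cdot w$ is a $G$-equivariant $\R$-linear map with kernel $\roots{K}\otimes\R=0$ and image the augmentation hyperplane $\R[S]^0=\ker(\mathrm{aug}\colon\R[S]\to\R)$; thus $\lambda$ identifies $\mt{\units{K}{S}}\otimes\R$ with $\R[S]^0$ as $\R[G]$-modules and embeds $\mt{\units{K^H}{S}}$ with full rank into $\R[S]^{0,H}$ for every $H\leq G$. Equipping $\R[S]$ with the $G$-invariant diagonal pairing $\langle\mathfrak{P},\mathfrak{P}'\rangle_S=d_\mathfrak{P}\delta_{\mathfrak{P},\mathfrak{P}'}$, the pairing of Example \ref{ex:rcexa} is precisely the pullback $\lambda^*\langle\cdot,\cdot\rangle_S$.

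Second, for each $H\leq G$, taking $H$-invariants of the split exact sequence $0\to\R[S]^0\to\R[S]\to\R\to 0$ yields a split short exact sequence $0\to\R[S]^{0,H}\to\R[S]^H\to\R\to 0$. Choosing a $\Z$-basis of $\Z[S]^H$ adapted to the internal decomposition $\R[S]^H=\R[S]^{0,H}\oplus\R\cdot e^H$ (for instance $e^H=\sum_{\mathfrak{P}\in S}\mathfrak{P}$) and eliminating cross terms via a unimodular Gram--Schmidt step, the Gram determinant of $\Z[S]^H$ under $\langle\cdot,\cdot\rangle_S$ factors as a product of the Gram determinant of $\Z[S]^{0,H}$ and the Gram determinant of the rank-one trivial piece $\Z\cdot e^H$. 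On the other hand, the Gram determinant of $\mt{\units{K^H}{S}}$ under $\lambda^*\langle\cdot,\cdot\rangle_S$ equals the square of the covolume of $\lambda(\mt{\units{K^H}{S}})$ inside $\R[S]^{0,H}$, which by the very definition of the $S$-regulator equals $\reg{K^H}{S}^2$ times an elementary rational factor.

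Third, scaling the Gram forms by $1/|H|$, raising to the power $n_H$ and multiplying over $H\leq G$, the pieces organize themselves exactly as claimed: the torsion prefactor $\card{\tors{(\units{K}{S})^H}}^{-2}=|\roots{K^H}|^{-2}$ in Definition \ref{def:rc} produces $\rc{\Theta}{\roots{K}}=\prod_H|\roots{K^H}|^{-2n_H}$ (the Gram factor of the finite module $\roots{K}$ being trivial); the Gram determinants of $\Z[S]^H$ give $\rc{\Theta}{\Z[S]}$, appearing with exponent $-1$ because it sits in the denominator of the covolume relation; the Gram determinants of the trivial summand $\Z\cdot e^H$ give $\rc{\Theta}{\Z}$; and the $\reg{K^H}{S}^2$ factors give $\prod_H\reg{K^H}{S}^{2n_H}$.

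The main obstacle is the careful bookkeeping of normalization constants: the scalars $d_\mathfrak{P}$, the factors $1/|H|$ from Definition \ref{def:rc}, and the explicit rational prefactor relating the Dirichlet covolume to the classical regulator must all either cancel or collapse precisely into $\rc{\Theta}{\Z}$ and $\rc{\Theta}{\Z[S]}$, leaving no residue. Because the decomposition $\R[S]^H=\R[S]^{0,H}\oplus\R\cdot e^H$ is not orthogonal with respect to $\langle\cdot,\cdot\rangle_S$, the block-determinant step requires a Gram--Schmidt correction of determinant $1$; this is the technical core of the \emph{elementary linear-algebra computation} alluded to in the statement.
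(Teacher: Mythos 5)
Your route is the same one the paper takes: Theorem \ref{thm:rcreg} is exactly the ``elementary linear algebra computation'' of Kani and Bartel, and the paper's own proof is a citation of \cite[Corollary 8.3 and Lemma 8.4]{Kani1994} and \cite[Proposition 2.15]{Bartel2012}, supplemented only by the remark that Proposition \ref{prop:rces} applied to $0\to\roots{K}\to\units{K}{S}\to\mt{\units{K}{S}}\to 0$ accounts for the factor $\rc{\Theta}{\roots{K}}\kd{\Theta}{p}^2$. Your soft steps are fine: the pairing of Example \ref{ex:rcexa} is indeed $\lambda^*\langle\cdot,\cdot\rangle_S$, the determinant in Definition \ref{def:rc} is taken on $\mt{M^H}=\mt{\units{K^H}{S}}$ so no $\kd{\Theta}{p}$ correction is needed in your formulation, and the torsion prefactor is exactly $\rc{\Theta}{\roots{K}}$.

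There are, however, two concrete problems. First, a setup error: since the product formula reads $\sum_w d_w\log|u|_w=0$, the image of $\lambda(u)=\sum_w\log|u|_w\cdot w$ is the kernel of the \emph{weighted} augmentation $\sum a_w w\mapsto\sum d_w a_w$, not of the standard one; for your pairing $\langle\mathfrak{P},\mathfrak{P}'\rangle_S=d_{\mathfrak{P}}\delta_{\mathfrak{P},\mathfrak{P}'}$ this hyperplane is precisely the orthogonal complement of $e=\sum_{\mathfrak{P}}\mathfrak{P}$, so the decomposition you need is in fact orthogonal and no Gram--Schmidt correction arises. What does arise, and what your ``unimodular adapted basis'' glosses over, is that $\Z[S]^H$ (spanned by orbit sums) does not split as $\bigl(\Z[S]^H\cap e^\perp\bigr)\oplus\Z e$: the Gram determinants differ by the square of a nontrivial index, which must be tracked and shown to cancel in the alternating product over $H$. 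Second, and more seriously, the decisive step is asserted rather than proved: the covolume of $\lambda\bigl(\mt{\units{K^H}{S}}\bigr)$ differs from $\reg{K^H}{S}$ (defined via the places of $K^H$ and their local degrees) by an explicit factor built from orbit sizes and the $d_w$, and the statement that these factors, together with the $1/\card{H}$ normalizations, assemble into exactly $\rc{\Theta}{\Z[S]}^{-1}\rc{\Theta}{\Z}$ \emph{is} the theorem. Your third paragraph restates this conclusion instead of carrying out the computation, so as written the proposal is a correct strategy with the core identity still to be verified.
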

\begin{proof}
See \cite[Corollary 8.3 and Lemma 8.4]{Kani1994}. The result can also be deduced from  \cite[Proposition 2.15]{Bartel2012}, together with the fact that, by Proposition \ref{prop:rces}, 
\[
\rc{\Theta}{\units{K}{S}} = \rc{\Theta}{\mt{\units{K}{S}}}\rc{\Theta}{\roots{K}} \kd{\Theta}{p}^2
\]
where $p:\units{K}{S} \to \mt{\units{K}{S}}$ is the standard projection. 
\end{proof}

Putting together Theorem \ref{thm:rccln} and \ref{thm:rcreg}, we obtain the desired relation between class numbers, regulators and orders of the groups of roots of unity (recall that $\rc{\Theta}{\roots{K}} = \prod_{H\leq G}\card{\roots{K^H}}^{-2n_H}$ by definition).
\begin{theorem}
For any Brauer relation $\Theta=\sum_{H\leq G} n_H H$ in $G$ we have
\[
\prod_{H\leq G}\left(\cl{K^H}{S}\right)^{n_H}= \prod_{H\leq G}\left(\frac{\card{\roots{K^H}}}{\reg{K^H}{S}}\right)^{n_H}.
\]
\end{theorem}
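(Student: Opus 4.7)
The plan is to read off the desired formula directly by equating the two expressions for $\rc{\Theta}{\units{K}{S}}$ provided by Theorem \ref{thm:rccln} and Theorem \ref{thm:rcreg}. Both theorems compute this single regulator constant, one in terms of class numbers and one in terms of regulators and roots of unity, so setting them equal should yield the class number formula after massaging.

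Concretely, I would first write
\[
\rc{\Theta}{\Z[S]}^{-1}\rc{\Theta}{\Z} \prod_{H\leq G}\left(\cl{K^H}{S}\right)^{-2n_H} = \rc{\Theta}{\roots{K}}\rc{\Theta}{\Z[S]}^{-1}\rc{\Theta}{\Z} \prod_{H\leq G}\left(\reg{K^H}{S}\right)^{2n_H},
\]
cancel the common factor $\rc{\Theta}{\Z[S]}^{-1}\rc{\Theta}{\Z}$, and then substitute the identity $\rc{\Theta}{\roots{K}} = \prod_{H\leq G}\card{\roots{K^H}}^{-2n_H}$ noted in the excerpt. This reduces the identity to
\[
\prod_{H\leq G}\left(\cl{K^H}{S}\right)^{-2n_H} = \prod_{H\leq G}\left(\frac{\reg{K^H}{S}}{\card{\roots{K^H}}}\right)^{2n_H},
\]
i.e. the square of the formula we want.

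The only step that requires a brief comment is extracting the square root. All quantities involved, $\cl{K^H}{S}$, $\card{\roots{K^H}}$, and $\reg{K^H}{S}$, are strictly positive real numbers, and so are their products and ratios raised to integer powers $n_H$; therefore the map $x \mapsto x^2$ on $\R_{>0}$ is injective and the desired equality of positive real numbers follows. Note that this is consistent with regulator constants being defined modulo $(\Z^\times)^2 = \{1\}$, so no sign ambiguity is hidden in the formalism. I do not foresee any real obstacle here: all hard work has already been done in Theorems \ref{thm:rccln} and \ref{thm:rcreg}, and the present step is purely a bookkeeping cancellation together with positivity.
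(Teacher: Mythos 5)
Your proposal is correct and follows essentially the same route as the paper, which likewise obtains the formula by equating the expressions for $\rc{\Theta}{\units{K}{S}}$ from Theorems \ref{thm:rccln} and \ref{thm:rcreg}, substituting $\rc{\Theta}{\roots{K}}=\prod_{H\leq G}\card{\roots{K^H}}^{-2n_H}$, and extracting the square root. Your explicit remark that the square root is legitimate because all quantities are positive reals is a welcome (if minor) addition to what the paper leaves implicit.
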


\printbibliography
\end{document}